\documentclass[letter]{amsart}

\usepackage{amssymb}
\usepackage{amsfonts}
\usepackage{amsmath}
\usepackage{amsthm}
\usepackage{mathtools}
\usepackage{graphicx}
\usepackage{mathrsfs}
\usepackage{dsfont}
\usepackage{amscd}
\usepackage{multirow}
\usepackage[all]{xy}
\normalfont
\usepackage[T1]{fontenc}
\usepackage{calligra}
\usepackage{verbatim}
\usepackage{fourier}
\usepackage[usenames,dvipsnames]{color}
\usepackage[colorlinks=true,linkcolor=Blue,citecolor=Violet]{hyperref}
\usepackage{enumerate}
\usepackage{slashed}
\usepackage{nicematrix}
\usepackage{microtype}
\usepackage{faktor}
\newcommand{\N}{{\mathds{N}}}
\newcommand{\Z}{{\mathds{Z}}}

\newcommand{\R}{{\mathds{R}}}
\newcommand{\C}{{\mathds{C}}}

\newcommand{\D}{{\mathfrak{D}}}
\newcommand{\A}{{\mathfrak{A}}}
\newcommand{\B}{{\mathfrak{B}}}

\newcommand{\bigslant}[2]{{\raisebox{.2em}{$#1$}\left/\raisebox{-.2em}{$#2$}\right.}}

\newcommand{\Lip}[1][L]{{\mathsf{#1}}}
\newcommand{\TLip}{{\mathsf{T}}}

\newcommand{\Hilbert}[1][H]{{\mathscr{#1}}}

\newcommand{\dpropinquity}[1]{{\mathsf{\Lambda}^\ast_{#1}}}

\newcommand{\spectralpropinquity}[1]{{\mathsf{\Lambda}^{\mathsf{spec}}_{#1}}}

\newcommand{\Kantorovich}[1]{{\mathsf{mk}_{#1}}}

\newcommand{\Haus}[1]{{\mathsf{Haus}\!\left[{#1}\right]\,}}

\newcommand{\StateSpace}{{\mathscr{S}}}

\newcommand{\MongeKant}{{Mon\-ge-Kan\-to\-ro\-vich metric}}

\newcommand{\mcc}[3]{{\mathrm{metCor}\left({#1},{#2},{#3}\right)}}

\newcommand{\qcms}{quantum compact metric space}

\newcommand{\unit}{1}

\newcommand{\sa}[1]{{\mathfrak{sa}\left({#1}\right)}}

\newcommand{\dd}[1]{{\, \mathrm{d}{#1}}}

\newcommand{\inner}[3]{{\left\langle{#1},{#2}\right\rangle_{#3}}}

\newcommand{\dom}[1]{{\operatorname*{dom}\left({#1}\right)}}

\newcommand{\qdiam}[2]{{\mathrm{qdiam}\left({#1},{#2}\right)}}

\newcommand{\norm}[2]{\left\|{#1}\right\|_{#2}}

\newcommand{\range}[1]{{\mathrm{ran}\left({#1}\right)}}
\newcommand{\grad}[2]{{\nabla_{#1}{#2}}} %{{\mathrm{grad}\left({#1}\right)}}
\newcommand{\multiplicity}[2]{{\mathrm{multiplicity}\left({#1}\middle\vert{#2}\right)}}

\newcommand{\CDN}{{\mathsf{DN}}}
\newcommand{\TDN}{{\mathsf{TN}}}

\newcommand{\worknote}[1]{}
\newcommand{\opnorm}[3]{{\left|\mkern-1.5mu\left|\mkern-1.5mu\left| {#1} \right|\mkern-1.5mu\right|\mkern-1.5mu\right|_{#3}^{#2}}}

\newcommand{\tunnelextent}[1]{{\chi\left({#1}\right)}}

\newcommand{\alg}[1]{{\mathfrak{#1}}}

\newcommand{\module}[1]{{\mathscr{#1}}}

\newcommand{\spectrum}[1]{\mathrm{Sp}\left({#1}\right)}

\newcommand{\ModState}[1]{\widehat{\StateSpace}}

\newcommand{\closure}[1]{\mathrm{cl}\left({#1}\right)}

\renewcommand{\geq}{\geqslant}
\renewcommand{\leq}{\leqslant}

\newcommand{\Dirac}[1][D]{{\slashed{#1}}}

\newcommand{\Cl}[1]{{\C\ell\left({#1}\right)}}

\theoremstyle{plain}
\newtheorem{theorem}{Theorem}[section]
\newtheorem*{theorem*}{Theorem}

\newtheorem{lemma}[theorem]{Lemma}

\newtheorem{theorem-definition}[theorem]{Theorem-Definition}

\theoremstyle{definition}
\newtheorem{definition}[theorem]{Definition}

\newtheorem{notation}[theorem]{Notation}

\theoremstyle{remark}

\newtheorem{remark}[theorem]{Remark}

\numberwithin{equation}{section}

\usepackage[T1]{fontenc}

\begin{document}

\title[]{Spectral continuity of almost commutative manifolds for the $C^1$ topology on Riemannian metrics}

\author{Fr\'{e}d\'{e}ric Latr\'{e}moli\`{e}re}
\address{Department of Mathematics \\ University of Denver \\ Denver CO 80208}
\email{frederic@math.du.edu}
\urladdr{https://fredericlatremoliere.com}

\date{\today}
\subjclass[2000]{Primary:  46L89, 46L30, 58B34.}
\keywords{Noncommutative metric geometry, isometry groups, quantum Gromov-Hausdorff distance, inductive limits of C*-algebras, Monge-Kantorovich distance, Quantum Metric Spaces, Spectral Triples, compact C*-metric spaces, AF algebras, twisted group C*-algebras.}

\begin{abstract}
	Almost commutative models provide a framework for Connes' work on the standard model of particle physics. These models are constructed as products of a the canonical spectral triple of a compact connected spin manifold with a finite dimensional spectral triple. Motivated by the fundamental question of the dependence of the spectra of Dirac operators under change of metrics, we prove the continuity of the spectra of Dirac operators for almost commutative models as functions of the underlying Riemannian metric. We allow both the Riemannian metric (in the $C^1$ topology) and the Dirac operator of the finite-dimensional factor to vary simultaneously. Since the physics of the system is fundamentally encoded in this spectrum, this result is a form of stability result regarding the geometry, or physical, content of these models. This result is based upon a novel approach to prove continuity of spectra of Dirac operators using the spectral propinquity. Notably, this method provides a new, different proof of the classical results as well. To illustrate the versatility of our new method, we also apply our results to completely non-commutative family of examples, including quantum tori and quantum solenoids.
\end{abstract}
\maketitle

%%%%%%%%%%%%%%%%%%%%%%%%%%%%%%%%%%%%%%%%%%%%%%%%%%%%%%%%%%%%%%%%%%%%%%%%%%%%%%%%%%%%%%%%%%%%%%%%%%%%%%%%%
\tableofcontents

Noncommutative geometry provides a framework to describe some fundamental aspects of the standard model of particle physics in geometric terms, by encoding the physics of particles in a spectral triple \cite{Connes, Connes97b, Connes07b, Connes08c,  Connes08b,Martin98}. This spectral triple is defined over a product of  the canonical spectral triple of a four dimensional spin manifold with a finite dimensional, non-commutative spectral triple; such product spectral triples are referred to as almost commutative. It is of course of great interest to understand the dependency of this construction on the underlying metric on the manifold. We prove in this paper a stability result, where the spectrum of an almost commutative spectral triple is shown to be continuous, in a strong sense, with respect to the underlying metric on the Riemannian manifold, where the space of such metrics is endowed with the so-called $C^1$-topology.

\medskip

The study of the dependence of the Dirac operator and its spectrum on the underlying Riemannian metric is of keen interest in geometry and physics, and has received notable attention, e.g. \cite{BG,Maier97,Nowaczyk13}, in particular when the space of Riemannian metric over some fixed spin manifold is endowed with the $C^1$ topology. In a parallel development, we introduced in \cite{Latremoliere18g} a distance, up to unitary equivalence, on the space of metric spectral triples, called the spectral propinquity. The spectral propinquity is a far-reaching noncommutative analogue of the Gromov-Hausdorff distance. We then proved in \cite{Latremoliere22} a form of continuity of the spectra of spectral triples for the spectral propinquity. Now, foundational examples of metric spectral triples are the triples given by the action of a Dirac operator on the Hilbert space of volume-form square integrable sections of the spinor bundle, on which the C*-algebra  of $\C$-valued continuous functions acts by fiberwise multiplication. It therefore seems appropriate to ask whether there is a continuous map from the space of Riemannian metric endowed with the $C^1$-topology to the space of spectral triples endowed with the spectral propinquity, and see if this continuity extend to almost commutative models.

\medskip

 A positive answer to this question is offered in this paper. As a corollary of this work, we recover results on the continuity of spectra of Dirac operators in terms of the metric, though through \emph{different methods} than previous authors in Riemannian geometry. On the other hand, the new method presented here does apply equally well to almost commutative spectral triples, and even spectral triples induced by ergodic actions of compact Lie groups over possibly simple C*-algebras.  We thus hope to demonstrate the versatility of the spectral propinquity, which enables us to establish both known results of interest in classical geometry, and new examples as well from noncommutative geometry, via a common and natural path. We emphasize that even in the classical case, our work here differs in methodology from the approach taken in \cite{BG} and related papers, offering a new perspective on this important problem. While developed in the narrow confine of operator algebra theory, the propinquity is thus a natural framework for applications in Riemannian geometry and mathematical physics.

The physical importance of the results presented here is that, in these almost commutative models, crucial physical quantities depend directly on the spectrum of the underlying spectral triple. Thus, discontinuity of the spectrum under fluctuations  of metrics would imply instability of the model. Our results reassures us that this is not the case, within a novel noncommutative geometry framework with an emphasis on metric properties. As Connes' initial discussion of metric noncommutative geometry in \cite{Connes} was motivated in large part by physical applications, our approach seems to follow a similar philosophy.

\medskip

A natural context for this work is the study of the dependence on the metrics of the Dirac operator undertaken in \cite{BG}. We borrow from their work their construction of unitaries between the Hilbert spaces of sections of spinors on which various Dirac operators acts. We then recover their observation that, once conjugated by these unitaries, Dirac operators form a continuous family for the Sobolev norm: a fact easily checked from the local formulation for the Dirac operators. We abstract these properties in Lemma (\ref{main-lemma}) to spectral triples on possibly noncommutative algebras, to show that under such conditions, spectral triples indeed form a continuous family for the spectral propinquity. We also prove Theorem (\ref{cont-thm}), which is a very general corollary of our work on the continuity (properly defined) of the bounded functional calculus for metric spectral triples which we proved in \cite{Latremoliere22}, and present a very concrete explanation of what convergence of spectrum means in our context, inspired once again from results in classical geometry but in our much wider noncommutative geometric context.

We can then apply Lemma (\ref{main-lemma}) to several situations. We first apply it to families of spectral triples obtained from ergodic actions of compact Lie groups on unital C*-algebras as introduced by Rieffel \cite{Rieffel98a}, when the metric on the Lie group is allowed to vary. Such examples include quantum tori and quantum solenoids, and thus can be very far from commutative (in fact, the underlying C*-algebra can be simple). We then recover the continuity of the spectrum of Dirac operators in the $C^1$-topology on the space of Riemannian metrics presented in the literature \cite{BG,Maier97}, though \emph{without} using the more involved computations in \cite{BG} regarding the differentiability of the family of Dirac operators: to make this apparent, we display all the computations needed to directly apply our main lemma. We then see how we can use similar computations to handle the continuity of the spectra of almost commutative models.

\medskip

To constrast a bit more formally our current approach here with \cite[p. 595]{BG} and subsequent work on this topic, the study of the spectral properties of families of Dirac operators in \cite{BG} is undertaken using a theorem of Rellich \cite[Theorem 3.9, .]{Kato},  using the notion of a holomorphic family of self-adjoint operators. However, the definition of a holomorphic family requires a parameter varying in some complex domain; in \cite{BG}, this is achieved by working with affine path between metrics; futher expansions were done for so-called analytic paths of Riemannian metrics. Under these additional conditions, \cite[Theorem 2.6, p. 359]{Kato} gives some means to obtain holomorphic families, to which Rellich's theorem \cite[Theorem 3.9, .]{Kato} can be applied to derive the continuity of eigenvalues. For our approach, instead, we simply consider any sequence of Riemannian metric converging in the $C^1$ topology, and we use the continuity of the family of associated Dirac operators (in an appropriate, simple, sense), directly following their definition, to conclude the same results on the continuity of the spectrum. We do not involve holomorphic families at all, nor do we need any particular path between our metrics. 

So even in this classical picture, our approach is novel, and its relatively light requirements suggest it might be extended to other more singular situations; indeed our present work applies these methods to noncommutative examples already. We are interested in future publications to apply this theory to more involved situations where the base manifold is allowed to change, and the limit may no longer be a manifold, which is a situation the propinquity can handle without change to its construction, thus proving a uniform method to study many different singular and even noncommutative situations.

\medskip

As we work at the intersection of two different areas of research --- noncommutative metric geometry and Riemannian geometry, we do include some brief background to help the readers see the logic of our argument, though we very much defer to the literature for the many important tools and known results which we will use of that we re-establish. We thus begin with a brief and somewhat compressed summary of the construction of the spectral propinquity, though we refer the reader to \cite{Latremoliere22} for a more detailed, explained and illustrated exposition of this metric. We then prove a lemma which captures the situation we encounter in Riemannian geometry completely. We then remind the reader of a definition of the $C^1$-topology, and prove our main theorem showing the continuity of the map from metrics to Dirac operators from this topology to the spectral propinquity, and explain some basic consequences of this result.

\section{Background: The Spectral Propinquity}

Noncommutative metric geometry studies notions of convergence for certain possibly noncommutative algebras which are analogues of the algebras of Lipschitz functions \cite{Weaver99} over a metric space, largely inspired by the Gromov-Hausdorff distance \cite{Hausdorff, Gromov81}. In time, a Gromov-Hausdorff type distance between metric spectral triples, which are defined below, was introduced \cite{Latremoliere18g,Latremoliere22}.  We present a rather brief summary of the construction of this distance in this section; its motivations and examples can be found in \cite{Latremoliere22}.

\medskip

 If $(X,d)$ is a compact metric space, then the natural encoding of the metric $d$ at the level of functions is given by the Lipscthiz seminorm. In turn, the Lipschitz seminorm induces by duality a metric on the state space of the C*-algebra $C(X)$ of $\C$-valued continuous functions over $X$, introduced originally by Kantorovich \cite{Kantorovich40,Kantorovich58}. The notion of a {\qcms}, introduced by Connes \cite{Connes89} and Rieffel \cite{Rieffel98a,Rieffel99} is based on this model, by identifying certain densely defined seminorms on possibly noncommutative, unital C*-algebras as analogues of the Lipscthiz seminorm. The presentation we give rests on the definition of a {\qcms} which we introduced in \cite{Latremoliere13,Latremoliere13b,Latremoliere15} which fits particularly well with spectral triples.
 
\begin{notation}
	If $E$ is any vector space, then the norm on $E$ is denoted by $\norm{\cdot}{E}$ unless otherwise specified. The norm of a linear endomorphism $T$ of $E$ (allowing $\infty$) is denoted by $\opnorm{T}{}{E}$. When $\A$ is a unital C*-algebra, with unit $\unit_\A$, then the state space of $\A$ is denoted by $\StateSpace(\A)$. If $a\in\A$, then $\Re a = \frac{a+a^\ast}{2}$ and $\Im a = \frac{a-a^\ast}{2i}$. The set $\{ \Re a : a \in \A\}$ of self-adjoint elements in $\A$ is denoted by $\sa{\A}$; it is a Jordan-Lie algebra for the Jordan product  $a,b\in\sa{\A}\mapsto \Re(ab)$  and the Lie product $a,b\in\sa{\A} \mapsto \Im(ab)$. 
\end{notation}

\begin{definition}[\cite{Latremoliere13,Latremoliere13b}]\label{qcms-def}
	A \emph{\qcms} $(\A,\Lip)$ is an ordered pair of a unital separable C*-algebra $\A$ and a seminorm $\Lip$ defined on a dense Jordan-Lie subalgebra $\dom{\Lip}$ of $\sa{\A}$ such that $\Lip(\unit_\A) = 0$ and:
	\begin{enumerate}
		\item the {\MongeKant} $\Kantorovich{\Lip}$, defined for all $\varphi, \psi \in \StateSpace(\A)$ by
		\begin{equation*}
			\Kantorovich{\Lip}(\varphi,\psi) \coloneqq \sup\left\{ |\varphi(a) - \psi(a)| : a\in\dom{\Lip}, \Lip(a) \leq 1 \right\}
		\end{equation*}
		induces the weak* topology on $\StateSpace(\A)$,
		\item $\max\left\{ \Lip(\Re a), \Lip(\Im a) \right\} \leq \norm{a}{\A} \Lip(b) + \Lip(a) \norm{b}{\A}$ for all $a,b \in \dom{\Lip}$,
		\item $\{ a \in \dom{\Lip} : \Lip(a) \leq 1 \}$ is closed in $\A$.
	\end{enumerate}	
\end{definition}

\begin{remark}
	We use the notation in Definition (\ref{qcms-def}). If $\Lip(a) = 0$ for $a \notin \R\unit_\A$ and $a\in\dom{\Lip}$, then there exists two states $\varphi,\psi \in \StateSpace(\A)$ such that $\varphi(a)\neq\psi(a)$, and since $\Lip(t a)=0$ for all $t > 0$, 
	\begin{equation*}
		t |\varphi(a) - \psi(a)| \leq \Kantorovich{\Lip}(\varphi,\psi)
	\end{equation*}
	so $\Kantorovich{\Lip}(\varphi,\psi) = \infty$; this is not compatible with $(\StateSpace(\A),\Kantorovich{\Lip})$ being a compact space. So $\Lip(a) = 0 \implies a \in \R\unit_\A$. 
\end{remark}

\begin{remark}
	Sometimes, it is helpful to generalize the Leibniz inequality in Definition (\ref{qcms-def}), (2), see \cite{Latremoliere15,Latremoliere15d}. As we work with spectral triples, this will not be necessary. Moreover, we point out that a notion of a locally compact quantum metric spaces (and associated Gromov-Hausdorff convergence) was devised in \cite{Latremoliere25b}. We will focus entirely on the unital case here.
\end{remark}

Of prime interest for us in this paper are L-seminorms induced by spectral triples in the manner suggested by Connes \cite{Connes89}. As stated in our introduction, a spectral triple is a (far reaching) abstraction of the data given by a Dirac operator. Introduced by Connes \cite{Connes89}, it is the basis for noncommutative differential geometry \cite{Connes}.
\begin{definition}[\cite{Connes89}]
	A \emph{spectral triple} $(\A,\Hilbert,\Dirac)$ is given by a unital C*-algebra $\A$ acting on a Hilbert space $\Hilbert$, and a self-adjoint operator $\Dirac$ defined on a dense subspace $\dom{\Dirac}$ of $\Hilbert$, with compact resolvent, and such that 
	\begin{equation*}
		\A_{\Dirac}\coloneqq \left\{ a \in \A : a\dom{\Hilbert}\subseteq\dom{\Hilbert}\text{ and }[\Dirac,a] \text{ is bounded on $\dom{\Dirac}$ } \right\}
	\end{equation*}
	is dense in $\A$.
\end{definition}

If $(\A,\Hilbert,\Dirac)$ is a spectral triple, and $a \in \A_{\Dirac}$, then $[\Dirac,a]$ will still denote the unique extension by uniform continuity of $[\Dirac,a]$ to $\Hilbert$. 

An important example for us in this article is also the first example in \cite{Connes89}. If $M$ is a spin manifold, if $g$ is a Riemannian metric on $M$, if $\Sigma^g M$ is the bundle of spinors over $M$ for $g$, and if $\Dirac_g$ is the Dirac operator defined by $g$ on the Hilbert space $\Gamma_{L^2}(\Sigma^g M)$ of sections of $\Sigma^g M$ which are square integrable for the volume form induced by $g$, then $(C(M),\Gamma_{L^2}(\Sigma^g M),\Dirac_g)$ is a spectral triple. If moreover, $M$ is connected, then the {\MongeKant} $\Kantorovich{\Lip_{\Dirac_g}}$ restricts to the path metric induced by $g$ on $M$ --- having identified $M$ with the space of characters of $C(M)$ endowed with the weak* topology. In particular, this spectral triple is \emph{metric} in the following natural sense.

\begin{definition}[\cite{Latremoliere18g}]\label{metric-spectral-triple-def}
	A spectral triple $(\A,\Hilbert,\Dirac)$ is \emph{metric} when $(\A,\Lip_{\Dirac})$ is a {\qcms}, where for all $a\in\A_{\Dirac}$, we set:
	\begin{equation*}
		\Lip_{\Dirac}(a) = \opnorm{[\Dirac,a]}{}{\Hilbert} \text.
	\end{equation*}
\end{definition}

\begin{notation}
	Keeping the notations of Definition (\ref{metric-spectral-triple-def}), the associated {\MongeKant} $\Kantorovich{\Lip_{\Dirac}}$ may be simply denoted as $\Kantorovich{\Dirac}$ and is known in this context as \emph{Connes distance} associated with the spectral triple $(\A,\Hilbert,\Dirac)$.
\end{notation}

\medskip

We refer to \cite{Latremoliere16} for an appropriate notion of \emph{Lipschitz morphisms} between {\qcms s}. For our purpose, it will suffice to define \emph{quantum isometries}. The following definition is based on the insight of Rieffel \cite{Rieffel00}, inspired by a result of McShane \cite{McShane34}.
\begin{definition}\label{quantum-isometry-def}
Let $(\A,\Lip_\A)$ and $(\B,\Lip_\B)$ be two {\qcms s}. A \emph{quantum isometry} $\pi : (\A,\Lip_\A) \rightarrow (\B,\Lip_\B)$ is a surjective *-morphism $\pi : \A\rightarrow\B$ such that $\pi(\dom{\Lip_\A}) = \dom{\Lip_\B}$ and
\begin{equation*}
	\forall b \in \dom{\Lip_\B} \quad \Lip_\B(b) = \inf\left\{ \Lip_\A(a) : a \in\dom{\Lip_\A}, \pi(a) = b \right\} \text.
\end{equation*}
\end{definition}
It is easy to check that if $\pi$ is a quantum isometry, then 
\begin{equation*}
	\pi^\ast : \varphi\in\StateSpace(\B) \mapsto \varphi\circ\pi \in \StateSpace(\A)
\end{equation*}
 is an actual isometry from $(\StateSpace(\B),\Kantorovich{\Lip_\B})$ to $(\StateSpace(\A),\Kantorovich{\Lip_\A})$. 

\medskip

To define an analogue of the Gromov-Hausdorff distance between {\qcms s}, we ``isometrically embedd'' two {\qcms s} into a third and compute the Hausdorff distance between appropriate sets of states, metrized by the {\MongeKant}.  Formally, this idea is encoded in the notion of a tunnel.
\begin{notation}
	The Hausdorff distance \cite{Hausdorff} on the set of closed subsets of a metric space $(X,d)$ is denoted by $\Haus{d}$, or even $\Haus{X}$ if no confusion arises.
\end{notation}

\begin{definition}[\cite{Latremoliere14}]\label{tunnel-def}
Let $(\A,\Lip_\A)$ and $(\B,\Lip_\B)$ be two {\qcms s}. A \emph{tunnel} $\tau = (\D,\Lip,\pi,\rho)$   from $(\A,\Lip_\A)$ to $(\B,\Lip_\B)$ is given by a {\qcms} $(\D,\Lip)$, and two quantum isometries $\pi :(\D,\Lip)\rightarrow(\A,\Lip_\A)$ and $\rho:(\D,\Lip)\rightarrow(\B,\Lip_\B)$. The \emph{extent} $\tunnelextent{\tau}$ of $\tau = (\D,\Lip,\pi,\rho)$ is:
\begin{multline*}
\max\left\{ \Haus{\Kantorovich{\Lip}}\left(\StateSpace(\D),\pi^\ast(\StateSpace(\A))\right),\Haus{\Kantorovich{\Lip}}\left(\StateSpace(\D),\rho^\ast(\StateSpace(\B))\right)\right\} \text. 
\end{multline*}
\end{definition}

If for any two {\qcms s} $(\A,\Lip_\A)$ and $(\B,\Lip_\B)$, we define 
\begin{equation*}
	\dpropinquity{}((\A,\Lip_\A),(\B,\Lip_\B)) \coloneqq \inf\left\{ \tunnelextent{\tau} : \tau \text{ is a tunnel from }(\A,\Lip_\A)\text{ to }(\B,\Lip_\B) \right\} \text.
\end{equation*}
This metric, the \emph{propinquity}, is indeed a complete metric up to full quantum isometry on the space of {\qcms s} \cite{Latremoliere13,Latremoliere13b,Latremoliere14}, which is the foundation of our research project (e.g. \cite{Latremoliere13c,Latremoliere15,Latremoliere15d,Latremoliere16,Aguilar16a,Aguilar16b,Rieffel17a,Latremoliere17b,Aguilar18,Latremoliere21c}). We will however focus in this paper on defining our metric specifically for spectral triples, which require more ingredients, i.e. is a form of the propinquity which accounts for the additional information contained in a spectral triple besides its Connes distance.

\medskip

A spectral triple $(\A,\Hilbert,\Dirac)$ gives rise to more structure than just the underlying metric. In fact, $\Hilbert$ is an $\A$-module, and $\Dirac$ induces its graph norm on a dense subspace (analogues to a Sobolev space in this generalized setting), whose closed unit ball is compact in $\Hilbert$. We abstract this structure as our next step toward defining a distance between metric spectral triples. It will be helpful for the construction of the spectral propinquity to note that $\Hilbert$ is in fact a $\C$-Hilbert module (i.e. a Hilbert space!) carrying an action of $\A$, so that $(\Hilbert,\A,\C)$ is a $C^\ast$-correspondence, in the following sense.

A \emph{$C^\ast$-correspondence} $(\module{M},\A,\B)$ is a Hilbert $\B$-module together with a *-morphism from $\A$ to the C*-algebra of adjoinable $\B$-linear operators over $\module{M}$. In particular, $\module{M}$ is an $\A$-$\B$-bimodule. We denote the $\B$-valued inner product on $\module{M}$ by $\inner{\cdot}{\cdot}{\module{M}}$, and the induced norm on $\module{M}$ by $\norm{\cdot}{\module{M}} : \omega \mapsto \sqrt{\inner{\omega}{\omega}{\module{M}}}$. 

Let $(\module{M},\A,\B)$ and $(\module{N},\D,\alg{E})$ be two $C^\ast$-correspondences. A triple $(\Pi,\pi,\rho)$ is a morphism of $C^\ast$-correspondences from $(\module{M},\A,\B)$ and $(\module{N},\D,\alg{E})$ when $\pi : \A\rightarrow\D$ and $\rho:\B\rightarrow\alg{E}$ are two unital *-morphisms, and $\Pi : \module{M}\rightarrow\module{N}$ is a linear map, such that:
\begin{enumerate}
	\item $\Pi(a \omega b) = \pi(a) \Pi(\omega) \rho(b)$ for all $a\in \A$, $b\in \B$ and $\omega\in\module{M}$,
	\item $\inner{\Pi(\omega)}{\Pi(\eta)}{\module{N}} = \rho(\inner{\omega}{\eta}{\module{M}})$ for all $\omega,\eta\in\module{M}$.
\end{enumerate}

For our purpose, a spectral triple $(\A,\Hilbert,\Dirac)$ gives rise to a $C^\ast$-correspondence with additional metric data encoded in various seminorms. Formally, if $\CDN$ is the graph norm $\xi \in \dom{\Dirac}\mapsto \norm{\xi}{\Hilbert} + \norm{\Dirac\xi}{\Hilbert}$, then $\mcc{\A}{\Hilbert}{\Dirac} \coloneqq (\Hilbert,\CDN,\A,\Lip_\D,\C,0)$ is a special case of the following structure.
\begin{definition}[\cite{Latremoliere16c,Latremoliere18d}]\label{mcc-def}
	A \emph{metrical $C^\ast$-correspondence} $(\module{M},\CDN, \A,\Lip_\A,\B,\Lip_\B)$ is a $C^\ast$-correspondence $(\module{M},\A,\B)$ such that, moreover, $(\A,\Lip_\A)$ and $(\B,\Lip_\B)$ are {\qcms s}, and $\CDN$ is a norm on a dense $\C$-subspace $\dom{\CDN}$ of $\module{M}$ such that
	\begin{enumerate}
		\item $\{ \omega\in\module{M} : \CDN(\omega) \leq 1 \}$ is compact in $\module{M}$,
		\item $\CDN\geq\norm{\cdot}{\module{M}}$,
		\item $\max\left\{ \Lip_\B\left(\Re\inner{\omega}{\omega'}{\module{M}}\right), \Lip_\B\left(\Im\inner{\omega}{\omega'}{\module{M}}\right)\right\} \leq 2 \CDN(\omega) \CDN(\omega')$ for all $\omega,\omega' \in \dom{\CDN}$,
		\item $\CDN(a \omega) \leq \left(\Lip_\A(a) + \norm{a}{\A}\right)\CDN(\omega)$ for all $a\in \dom{\Lip_\A}$ and $\omega\in\dom{\CDN}$.
	\end{enumerate}
\end{definition}

\begin{remark}
	The inequalities in (3) and (4) in Definition (\ref{mcc-def}) are directly inspired from the properties of a metric connection, and are analogues of the Leibniz inequality for Lipschitz seminorms, but in this more general context.
\end{remark}

We specialize the notion of a correspondence morphism to an isometric one when working with metrical $C^\ast$-correspondences. The idea behind this definition is explained in \cite{Latremoliere16c}.

\begin{definition}[\cite{Latremoliere16c,Latremoliere18d}]
	Let $\mathds{M}_1 \coloneqq (\module{M},\CDN_{\module{M}},\A,\Lip_\A,\B,\Lip_\B)$ and $\mathscr{M}_2 \coloneqq (\module{N},\CDN_{\module{N}}, \D,\Lip_\D, \alg{E},\Lip_{\alg{E}})$ be two metrical $C^\ast$-correspondences. A \emph{quantum isometry} of metrical $C^\ast$-correspondences from $\mathds{M}_1$ to $\mathds{M}_2$ is a morphism of $(\Pi,\pi,\rho)$ of $C^\ast$-correspondences from $(\module{M},\A,\B)$ to $(\module{N},\D,\alg{E})$ such that
	\begin{enumerate}
		\item $\pi :(\A,\Lip_\A)\rightarrow (\D,\Lip_\D)$ and $\rho : (\B,\Lip_\B) \rightarrow (\alg{E},\Lip_{\alg{E}})$ are quantum isometries,
		\item $\Pi$ is surjective, $\Pi(\dom{\CDN_{\module{M}}}) = \dom{\CDN_{\module{N}}}$, and for all $\omega\in\dom{\CDN_{\module{N}}}$:
		\begin{equation*}
			\CDN_{\module{N}}(\omega) = \inf\left\{ \CDN_{\module{M}}(\eta)  \eta\in\dom{\CDN_{\module{M}}} \cap \Pi^{-1}(\{\omega\})\right\} \text.
		\end{equation*}
	\end{enumerate}
\end{definition}

\medskip

We now define the notion of convergence for metric spectral triples. Inspired by the work of Gromov \cite{Gromov,Gromov81}, we begin by defining a form of ``isometric embedding'' for the structures above.
\begin{definition}
	Let $\mathds{A}$ and $\mathds{B}$ be two metrical $C^\ast$-correspondences. A \emph{metrical tunnel} $\tau$ from $\mathds{A}$ to $\mathds{B}$ is given as a tuple:
	\begin{equation*}
		\left(\mathds{P}, \Pi, R \right)
	\end{equation*}
	where $\mathds{P}$ is a metrical $C^\ast$-correspondence, while $\Pi :\mathds{P}\rightarrow\mathds{A}$ and $R: \mathds{P}\rightarrow\mathds{B}$ are quantum isometries.
\end{definition}

We then associate a number to a metrical tunnel. The key here is the observation that a metric tunnel contains two embeddings of {\qcms s} for which we know how to associate a number called the extent. Unfortunately, our definition requires that we expand the definition of a tunnel in all of its components for notation purposes.
\begin{definition}[\cite{Latremoliere18d}]
	The \emph{extent} of a metrical tunnel 
	\begin{equation*}
		( \underbracket[1pt]{\module{M}, \CDN, \D, \Lip_\D, \alg{E},\Lip_{\alg{E}}}_{\text{metrical $C^\ast$-correspondence}}, \underbracket[1pt]{(\Pi,\pi,\varpi)}_{\substack{\text{metrical} \\ \text{quantum isometry}}}, \underbracket[1pt]{(R,\rho,\varrho)}_{\substack{\text{metrical} \\ \text{quantum isometry}}})
	\end{equation*} 
	is
	\begin{equation*}
	\max\left\{ \text{extent of }(\D,\Lip_\D,\pi,\rho)\text{ and the extent of }(\alg{E},\Lip_{\alg{E}},\varpi,\varrho) \right\} \text.
	\end{equation*}
\end{definition}

When $(\A,\Hilbert_\A,\Dirac_\A)$ and $(\B,\Hilbert_\B,\Dirac_\B)$ are two metric spectral triples, a \emph{tunnel} from $(\A,\Hilbert_\A,\Dirac_\A)$ to $(\B,\Hilbert_\B,\Dirac_\B)$ is simply a tunnel from $\mcc{\A}{\Hilbert_\A}{\Dirac_\A}$ to $\mcc{\B}{\Hilbert_\B}{\Dirac_\B}$.

We can indeed define a Gromov-Hausdorff distance between metrical C*-corr\-espon\-dences by taking the infimum of the extents of all metrical tunnels between two given metrical C*-correspondence, and this metric is complete, up to full quantum isometry, with some applications, for instance, to Heisenberg modules over quantum tori \cite{Latremoliere16c,Latremoliere18d,Latremoliere17a,Latremoliere18a}. Yet distance zero between metrical C*-correspondences constructed from metric spectral triples stil does not quite give us unitary equivalence between metric spectral triples. So we strengthen our metric one last time.

\medskip

A spectral triple induces one more object on its metrical C*-correspondence: an action of $\R$ by unitaries. We want to account for this as well in our metric. We developed a covariant version of both the propinquity \cite{Latremoliere18b,Latremoliere18c,Latremoliere23b} and the modular propinquity presented above \cite{Latremoliere18g}, which is the basis for our spectral propinquity. The idea, roughly, is to use the notion of a metrical tunnel to measure how long the orbits of these actions, restricted to $[0,\infty)$, stay close to each other. This is captured most easily in our exposition in \cite{Latremoliere22} by using the following concept, which is an obvious analogue of the {\MongeKant} for metrical $C^\ast$-correspondences. Moreover, we can also use the following definitions when discussing the convergence of functional calculi for Dirac operators under our metric. It will be helpful to introduce the following notation for the analogue of the closed unit ball of a Sobolev space:

\begin{definition}[\cite{Latremoliere22}]
If $(\module{M},\CDN,\A,\Lip_\A,\B,\Lip_\B)$ is a metrical C*-correspondence, then:
	\begin{equation*}
		\Kantorovich{\CDN}(\omega,\omega') \coloneqq\sup\left\{ \left|\inner{\omega - \omega'}{\eta}{\module{M}}\right| : \eta\in\module{B}_{\CDN} \right\}
	\end{equation*}
where 
	\begin{equation*}
		\module{B}_{\CDN} \coloneqq \left\{ \omega \in \dom{\CDN} : \CDN(\omega)\leq 1 \right\} \text.
	\end{equation*}

	For any nonempty set $J$, we extend $\Kantorovich{\CDN}$ to families indexed by $J$ by simply setting:
\begin{equation*}
		\Kantorovich{\CDN}((\omega_j)_{j\in J},(\omega')_{j\in J}) \coloneqq\sup_{j \in J}\Kantorovich{\CDN}(\omega_j,\omega'_j) \text.
	\end{equation*}
	
\end{definition}

\begin{definition}[\cite{Latremoliere22}]\label{sep-def}
If $(\module{M},\CDN,\A,\Lip_\A,\B,\Lip_\B)$ is a metrical C*-correspondence, and if $(A_j)_{j \in J}$, $(B_j)_{j \in J}$ are two families of adjoinable operators over $\module{M}$, then
	\begin{equation*}
		 \coloneqq  \Haus{\Kantorovich{\CDN}}\left\{ \left\{ (A_j\omega)_{j\in J} : \omega\in \module{B}_{\CDN}\right), \left\{ (B_j\omega)_{j\in J} : \omega\in \module{B}_{\CDN}\right) \right\}\text.
	\end{equation*}
\end{definition}

\begin{definition}[\cite{Latremoliere18g,Latremoliere22}]
	The \emph{spectral propinquity} $\spectralpropinquity{}((\A,\Hilbert_\A,\Dirac_\A), (\B,\Hilbert_\B,\Dirac_\B))$ is
	\begin{multline*}
		\inf\Big\{\varepsilon > 0 : \exists \tau \coloneqq (\mathds{P},(\Pi_\A,\pi,\pi'),(\Pi_\B,\rho,\rho')) \text{ tunnel from $(\A,\Hilbert_\A,\Dirac_\A)$ to $(\B,\Hilbert_\B,\Dirac_\B)$}\\
 \max\left\{ \tunnelextent{\tau}, \text{sep}( \exp(i t \Dirac_\A)\circ\Pi_\A)_{t \in [0,\varepsilon^{-1}]}, \exp(i t \Dirac_\B)\circ\Pi_\B)_{t \in [0,\varepsilon^{-1}]} ) \right\} < \varepsilon \Big\} \text.
	\end{multline*}
\end{definition}

We will see that convergence in the sense of the spectral propinquity has interesting implications. For now, we just note the following result.
\begin{theorem}[\cite{Latremoliere18g}]
	The spectral propinquity is a metric on the space of metric spectral triples, up to unitarily equivalence, i.e. for any two metric spectral triples $(\A,\Hilbert,\D)$ and $(\B,\Hilbert{G},\Dirac{T})$, we have
	\begin{equation*}
	\spectralpropinquity{}((\A,\Hilbert_\A,\Dirac_\A),(\B,\Hilbert_\B,\Dirac_\B)) = 0 
	\end{equation*}
	if, and only if, there exists a unitary $U : \Hilbert_\A\rightarrow \Hilbert_\B$ such that $a \in \A\mapsto U a U^\ast$ is a *-isomorphism from $\A$ onto $\B$, and
	\begin{equation*}
		U \dom{\Dirac_\A} = \dom{\Dirac_\B} \text{ and } U \Dirac_\A = \Dirac_\B U \text.
	\end{equation*}
\end{theorem}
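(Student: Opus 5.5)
The plan has two parts: first the easy direction, then the hard one via compactness of near-optimal tunnels.

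\emph{Easy direction.} Suppose $U$ is a unitary intertwining the algebras and the Dirac operators. Let $\alpha : a\mapsto UaU^\ast$. I will build the trivial tunnel with $\mathds{P} = \mcc{\A}{\Hilbert_\A}{\Dirac_\A}$, with $\Pi_\A$ the identity triple, and with $\Pi_\B = (U,\alpha,\mathrm{id}_\C)$. The steps are as follows.
\begin{enumerate}
\item Since $[\Dirac_\B,\alpha(a)] = U[\Dirac_\A,a]U^\ast$, we get $\Lip_{\Dirac_\B}\circ\alpha = \Lip_{\Dirac_\A}$. Since $U\Dirac_\A = \Dirac_\B U$, $U$ preserves the graph norms. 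Hence both maps are quantum isometries of metrical $C^\ast$-correspondences.
\item Both state spaces are pulled back onto all of $\StateSpace(\A)$, and the $\C$-components coincide, so the extent is $0$.
\item Because $\exp(it\Dirac_\B)U = U\exp(it\Dirac_\A)$, the two families $(\exp(it\Dirac_\A)\omega)_t$ and $(\exp(it\Dirac_\B)U\omega)_t$ are related by the fixed unitary $U$. Thus, read through the common correspondence, the separation is $0$ for every $\varepsilon$.
\end{enumerate}
Hence the spectral propinquity is $0$.

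\emph{Converse, setup and first step.} Assume the distance is $0$. Choose tunnels $\tau_n$ with extent and separation (over $t\in[0,n]$) less than $1/n$. I would proceed in four steps.
\begin{enumerate}
\item Apply the known result that the propinquity is a metric up to full quantum isometry, \cite{Latremoliere13b,Latremoliere14}. The underlying tunnels on algebras then have extent going to $0$, so $\dpropinquity{}((\A,\Lip_{\Dirac_\A}),(\B,\Lip_{\Dirac_\B}))=0$. This gives a full quantum isometry $\theta:\A\to\B$. More precisely, I would redo the Arzel\`a--Ascoli-type argument: the target sets (relations $\{(a,b): \exists d,\ \pi(d)=a,\ \rho(d)=b,\ \Lip(d)\le l\}$) lie in compact sets, so they converge in Hausdorff distance along a subnet to the graph of $\theta$.
\item Do the same on the modules. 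The relation sets $\{(\omega,\eta)\in\module{B}_{\CDN_\A}\times \module{B}_{\CDN_\B} : \exists\xi,\ \Pi_\A\xi=\omega,\ \Pi_\B\xi=\eta,\ \CDN(\xi)\le 1+1/n\}$ lie in a compact set, by compactness of the graph-norm balls (resolvents are compact). Pass to a subsequence converging in Hausdorff distance. The small extent, combined with the inner product condition (3) of Definition (\ref{mcc-def}), forces the limit to be the graph of a linear map $U$ that preserves inner products and graph norms and is compatible with $\theta$: $U(a\omega)=\theta(a)U\omega$. By symmetry, $U$ extends to a unitary $\Hilbert_\A\to\Hilbert_\B$ with $UaU^\ast=\theta(a)$.
\item The separation estimate on $[0,n]$ with error $1/n$ passes to the limit. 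This gives $\exp(it\Dirac_\B)U\omega$ and $U\exp(it\Dirac_\A)\omega$ at $\Kantorovich{\CDN_\B}$-distance $0$ for all $t\ge0$ and $\omega$ in the unit ball. Since $\Kantorovich{\CDN}$ metrizes the norm topology on bounded sets (density of $\dom{\CDN}$), these vectors are equal.
\item Take adjoints to get equality for $t<0$. By Stone's theorem, the generators agree: $U\dom{\Dirac_\A}=\dom{\Dirac_\B}$ and $U\Dirac_\A=\Dirac_\B U$.
\end{enumerate}

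\emph{Main obstacle.} I expect step 2, extracting a genuinely well-defined linear unitary $U$ from the limiting relations, to be the main difficulty. Well-definedness needs an estimate of the form
\begin{equation*}
\left|\inner{\omega}{\omega'}{}-\inner{\eta}{\eta'}{}\right|\lesssim \text{extent}
\end{equation*}
for related pairs. I would first try to derive this from the fact that the $\C$-valued inner products pushed through $\varpi,\varrho$ are compared via states of the tunnel's coefficient algebra, which are close by the extent bound. Linearity would then follow by polarization.
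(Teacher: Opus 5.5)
Your plan is correct and follows essentially the argument behind this cited result. The paper itself gives no proof and defers to \cite{Latremoliere18g}. There, vanishing spectral propinquity yields, via the coincidence property of the covariant modular propinquity, an equivariant full quantum isometry whose module part is your unitary $U$, and Stone's theorem finishes exactly as in your Step 4. That coincidence property rests on the same compactness argument you unpack, including your inner-product estimate, which does hold in the form $\left|\inner{\Pi_\A\xi}{\Pi_\A\xi'}{}-\inner{\Pi_\B\xi}{\Pi_\B\xi'}{}\right|\leq 4\tunnelextent{\tau}$ for $\xi,\xi'$ in the unit ball of the tunnel's D-norm, by condition (3) and $\Kantorovich{\Lip_{\alg{E}}}(\varpi,\varrho)\leq\tunnelextent{\tau}$.

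One small correction: $\Kantorovich{\CDN}$ metrizes the weak topology, not the norm topology, on bounded sets. In Step 3 you only use that it separates points, which the density of $\dom{\CDN}$ guarantees, so the conclusion stands.
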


The spectral propinquity can be applied to obtain interesting examples, such as matrix models converging to tori \cite{Latremoliere21a}, approximations of certain spectral triples over fractals \cite{Latremoliere20a}, or results on inductive limits \cite{Latremoliere23a,Latremoliere23b,Latremoliere24a,Latremoliere24b}.

Now, of prime interest for this paper is the continuity of the spectrum of the Dirac operator with respect to the spectral propinquity, which we established in \cite{Latremoliere22}. Now, using our results in \cite{Latremoliere22}, we wish to provide a form of this contintuity which is directly inspired by the Riemannian geometry literature.

\section{A continuity result for the spectral propinquity}

This section establishes the two main results upon which the rest of this paper relies.

We first see, as an improvement on \cite[Corollary 5.3]{Latremoliere22}, that in fact, counted with their multiplicities, the number of eigenvalues in any interval is constant when metric spectral triples are close enough. To this end, we use the continuity of the bounded continuous functional calculus, in an appropriate sense, which we established in \cite{Latremoliere22}.

\begin{notation}
	The spectrum of a self-adjoint operator $\Dirac$ is denoted by $\spectrum{\Dirac}$. Moreover, if $\lambda$ is an eigenvalue of $\Dirac$, then $\multiplicity{\lambda}{\Dirac}$ is the dimension of the eigenspace for $\lambda$.
\end{notation}

\begin{theorem}\label{cont-thm}
If $(\A_n,\Hilbert_n,\Dirac_n)_{n\in\N}$ is a sequence of metric spectral triples converging to a metric spectral triple $(\A_\infty,\Hilbert_\infty,\Dirac_\infty)$ for the spectral propinquity, then for any $\Lambda \in (0,\infty)\setminus \spectrum{\Dirac_\infty}$, there exists $N\in\N$ such that, if $n\geq N$, then
	\begin{equation*}
		\sum_{\substack{\lambda \in \spectrum{\Dirac_n} \\ -\Lambda\leq \lambda \leq \Lambda}} \multiplicity{\lambda}{\Dirac_n} = \sum_{\substack{\lambda \in \spectrum{\Dirac_\infty} \\  -\Lambda\leq \lambda \leq \Lambda}} \multiplicity{\lambda}{\Dirac_\infty} < \infty \text,
	\end{equation*} 
	and moreover, 
	\begin{equation*}
		\lim_{n\rightarrow\infty} \Haus{\C}(\spectrum{\Dirac_n}\cap[-\Lambda,\Lambda], \spectrum{\Dirac_\infty}\cap[-\Lambda,\Lambda]) = 0 \text.
	\end{equation*}
\end{theorem}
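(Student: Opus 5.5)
The plan is to derive both conclusions from the continuity of the bounded continuous functional calculus established in \cite{Latremoliere22}, together with the spectral continuity result \cite[Corollary 5.3]{Latremoliere22}. The latter already gives, roughly, Hausdorff convergence of $\spectrum{\Dirac_n}\cap K$ to $\spectrum{\Dirac_\infty}\cap K$ for compact $K$. That settles the second assertion once the boundary is controlled, and the real work is the multiplicity count.

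\textbf{Reading of the hypothesis.} I will use that both $\Lambda$ and $-\Lambda$ lie outside $\spectrum{\Dirac_\infty}$. The counting statement seems to need this symmetric hypothesis: an eigenvalue at $-\Lambda$ could split, under perturbation, into eigenvalues just below $-\Lambda$. Otherwise I would replace $[-\Lambda,\Lambda]$ by an interval whose endpoints avoid the spectrum.

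\textbf{Step 1: a spectral gap.} Since $\Dirac_\infty$ has compact resolvent, its spectrum is discrete with finite multiplicities. So there is $\delta>0$ such that $[\Lambda-\delta,\Lambda+\delta]$ and $[-\Lambda-\delta,-\Lambda+\delta]$ miss $\spectrum{\Dirac_\infty}$. By \cite[Corollary 5.3]{Latremoliere22} applied on the compact set $[-\Lambda-\delta,\Lambda+\delta]$, for $n$ large:
\begin{itemize}
\item[(a)] $\spectrum{\Dirac_n}$ misses $[\pm\Lambda-\delta/2,\pm\Lambda+\delta/2]$;
\item[(b)] $\spectrum{\Dirac_n}\cap[-\Lambda,\Lambda]$ is Hausdorff close to $\spectrum{\Dirac_\infty}\cap[-\Lambda,\Lambda]$.
\end{itemize}
The gap in (a) is what prevents eigenvalues from escaping or entering through the endpoints, so (b) gives the second conclusion.

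\textbf{Step 2: reduction to spectral projections.} Choose a continuous, compactly supported $f:\R\to[0,1]$ equal to $1$ on $[-\Lambda,\Lambda]$ and to $0$ outside $[-\Lambda-\delta/2,\Lambda+\delta/2]$. By Step 1, for $n$ large and for $n=\infty$, $f(\Dirac_n)$ is exactly the spectral projection $P_n$ onto the eigenspaces with eigenvalues in $[-\Lambda,\Lambda]$. The left-hand sum in the theorem is therefore $\operatorname{rank}P_n$, and it remains to show $\operatorname{rank}P_n=\operatorname{rank}P_\infty<\infty$ for large $n$. Finiteness for $n=\infty$ follows from compact resolvent. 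The functional calculus continuity theorem of \cite{Latremoliere22} supplies tunnels $\tau_n=(\mathds{P}_n,\Pi^n_\A,\Pi^n_\B)$ with extent tending to $0$, such that the sets
\[
\{f(\Dirac_n)\Pi^n(\omega):\omega\in\module{B}_{\CDN}\}
\quad\text{and}\quad
\{f(\Dirac_\infty)\Pi^n_\infty(\omega):\omega\in\module{B}_{\CDN}\}
\]
are within $\varepsilon_n\to 0$ for the Hausdorff distance associated with $\Kantorovich{\CDN}$.

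\textbf{Step 3: equality of ranks (the main obstacle).} The separation is measured in the weak, Monge–Kantorovich-type metric, not in norm, so closeness of these sets does not immediately yield a dimension count. I would argue by contradiction in both directions.

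Suppose $\operatorname{rank}P_n\ge r+1$ along a subsequence, where $r=\operatorname{rank}P_\infty$.
\begin{itemize}
\item[(i)] Take orthonormal eigenvectors $\xi^n_0,\dots,\xi^n_r$ of $\Dirac_n$ with eigenvalues in $[-\Lambda,\Lambda]$. Each has graph norm at most $1+\Lambda$, so after rescaling by $(1+\Lambda)^{-1}$ they lie in $\module{B}_{\CDN}$ and are fixed by $P_n$.
\item[(ii)] Lift them through the quantum isometry $\Pi^n$ and match them with vectors $\zeta^n_j$ in the compact, finite-dimensional set $P_\infty(\module{B}_{\CDN_\infty})$, up to $\varepsilon_n$ in $\Kantorovich{\CDN}$.
\item[(iii)] On the compact set $\module{B}_{\CDN_\infty}$, $\Kantorovich{\CDN}$ metrizes the norm topology. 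Because the extent tends to zero, and using the inner-product inequality of Definition (\ref{mcc-def})(3) transported through the tunnel, the Gram matrices $\bigl(\inner{\zeta^n_j}{\zeta^n_k}{}\bigr)$ should converge to $(1+\Lambda)^{-2}$ times the identity.
\item[(iv)] This produces $r+1$ asymptotically orthonormal vectors in an $r$-dimensional space, which is a contradiction.
\end{itemize}
The reverse inequality follows by exchanging the roles of $n$ and $\infty$, since the separation is a symmetric Hausdorff distance.

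The delicate point is (iii): controlling inner products, and not just the weak pairings against $\module{B}_{\CDN}$, through the tunnel. I would first try to use that the pairing against $\module{B}_{\CDN}$ already tests against the vectors themselves, since they lie in the ball after rescaling. Combined with compactness of $\module{B}_{\CDN}$, this should upgrade weak closeness to norm closeness of Gram matrices.
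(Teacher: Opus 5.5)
Your proposal is correct and follows essentially the same route as the paper. Both arguments run as follows:
\begin{enumerate}
\item obtain a spectral gap near $\pm\Lambda$ (the paper extracts it from \cite[Theorem 5.2]{Latremoliere22} by a compactness argument);
\item choose a continuous $f$ for which $f(\Dirac_n)$ is the spectral projection;
\item transport orthonormal eigenbases through the tunnels to get approximately orthonormal, hence linearly independent, families in each direction.
\end{enumerate}
Your step (iii) is exactly what the paper imports from \cite[Theorem 4.7, Corollary 3.13, Lemma 5.5]{Latremoliere22}, and your self-testing idea combined with inequality (3) of Definition (\ref{mcc-def}) does supply it.

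You are also right to require $-\Lambda\notin\spectrum{\Dirac_\infty}$. The paper uses this silently when choosing $\varepsilon$ with $\lambda_0-2\varepsilon>-\Lambda$, and without it both conclusions fail, e.g.\ for $\Dirac_n=\Dirac_\infty-\frac{1}{n}$ when $-\Lambda$ is an eigenvalue of $\Dirac_\infty$.
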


\begin{proof}
	Let $\Lambda > 0$, with $\Lambda \notin \spectrum{\Dirac_\infty}$. Since $\spectrum{\Dirac_\infty}$ is discrete, the set $\spectrum{\Dirac_\infty}\cap[-\Lambda,\Lambda]$ is finite; let us denote it as $\{ \lambda_0, \ldots, \lambda_N \}$ with $\lambda_0 < \lambda_1 < \ldots \lambda_N$. Let $\varepsilon > 0$ such that $\lambda_0 - 2\varepsilon > -\Lambda$ and $\lambda_n + 2\varepsilon < \Lambda$.
	
	We begin with a gap result. Suppose that, for all $N\in\N$, there exists $n\geq N$ such that the set 
	\begin{equation*}
		G_n \coloneqq \spectrum{\Dirac_n}\cap \left([-\Lambda,-\Lambda+\varepsilon] \cup [\Lambda-\varepsilon,\Lambda]\right) 
\end{equation*}
is not empty. Therefore, there exists a subsequence $(G_{f(n)})_{n\in\N}$ such that $G_{f(n)}$ is never empty. Let $\mu_n \in G_{f(n)}$. Since $\mu_n \in [-\Lambda,-\Lambda+\varepsilon] \cup [\Lambda-\varepsilon,\Lambda]$, and the latter set is compact, there exists a convergent subsequence of $(\mu_{f(n)})_{n\in\N}$. But its limit, which must lie in the closed set $[-\Lambda,-\Lambda+\varepsilon] \cup [\Lambda-\varepsilon,\Lambda]$, is also in $\spectrum{\Dirac}\cap[-\Lambda,\Lambda]$, by \cite[Theorem 5.2]{Latremoliere22}, which is a contradiction with our choice of $\varepsilon > 0$.

Hence, there exists $N_1\in \N$ such that, for all $n\geq N_1$, we have 
\begin{equation*}
\spectrum{\Dirac_n}\cap \left([-\Lambda,-\Lambda+\varepsilon] \cup [\Lambda-\varepsilon,\Lambda]\right) = \emptyset\text.
\end{equation*}
	  
Let now $p : \R\rightarrow [0,1]$ be a continuous function over $\R$ such that $p([\lambda_0 + \varepsilon, \lambda_n - \varepsilon]) = \{1\}$, while $p(\R\setminus[-\Lambda,\Lambda]) = \{0\}$. By construction, $p(\Dirac_n)$ is the spectral projection on $\spectrum{\Dirac_n}\cap[-\Lambda,\Lambda]$ for all $n\geq N_1$.
	
	Let $n \geq N_1$. We denote a tunnel $\tau_n$ from $(\A_n,\Hilbert_n,\Dirac_n)$ to $(\A_\infty,\Hilbert_\infty,\Dirac_\infty)$ whose extent is at most $\frac{\varepsilon}{18(1+\Lambda)^2}$.
	
	Let $\eta_{n,1}, \ldots, \eta_{n,d}$ be an orthonormal basis for the finite dimensional range of the spectral projection $p(\Dirac_n)$ --- that is, a basis for the sum of the proper spaces of $\Dirac_n$ associated with the eigenvalues in $\spectrum{\Dirac_n}\cap[-\Lambda,\Lambda]$. Note that indeed, this is a finite dimensional space since $\Dirac_n$ has compact resolvent.

	By definition of $p$, we have $p(\Dirac_n)\eta_{n,k} = \lambda \eta_{n,k}$ for some $\lambda\in\spectrum{\Dirac_n}\cap[-\Lambda,\Lambda]$. So $\CDN_n(f(\Dirac_n)\eta_{n,k}) \leq 1 + \Lambda$. Therefore, by \cite[Theorem 4.7, Corollary 3.13]{Latremoliere22}, for each $\eta_{n,j}$ (with $j\in\{1,\ldots,d\}$), there exists $\xi_{j} \in \Hilbert_\infty$ such that
	\begin{align*}
		\left| \inner{f(\Dirac_\infty)\xi_{j}}{f(\Dirac_\infty)\xi_{k} }{} \right| 
		&\leq \varepsilon + \left| \inner{f(\Dirac_n)\eta_{n,j}}{f(\Dirac_n)\eta_{n,k}}{} \right| \\
		&=\begin{cases}
			\varepsilon + 1 \text{ if $k = j$,}\\
			\varepsilon \text{ otherwise.}
		\end{cases}
	\end{align*}
	 Therefore, $f(\Dirac_\infty)\xi_{1},\ldots,f(\Dirac_\infty)\xi_{d}$ are linearly independent by \cite[Lemma 5.5]{Latremoliere22}. Hence $\xi_{1},\ldots,\xi_{d}$ are linearly independent. So $\dim\range{ p(\Dirac_\infty)} \geq \dim\range{p(\Dirac_n)}$.
	 
	 We can proceed as above, starting from an orthonormal basis of the range of $p(\Dirac_\infty)$ and proving that we can find a linearly independent family of the same cardinality in the range of $p(\Dirac_n)$. 
	 
	Since the dimensions of the ranges of $p(\Dirac_n)$ are the sum of the multiplicities of the associated eigenvalues in $\spectrum{\Dirac_n}\cap[-\Lambda,\Lambda]$, we conclude:	 
	 \begin{equation*}
	 	\sum_{j=0}^N \multiplicity{\lambda_j}{\Dirac_\infty} = \sum_{\lambda\in\spectrum{\Dirac_n}}\multiplicity{\lambda}{\Dirac_n} \text.
	 \end{equation*}
	 
	 We now proceed with a similar reasoning as above, for a different set, to conclude our proof. Fix $\varepsilon > 0$. Let
	 \begin{equation*}
	 	H_n \coloneqq (\spectrum{\Dirac_n}\cap [-\Lambda,\Lambda])\setminus\left(\bigcup_{j=0}^n (\lambda_j - \varepsilon, \lambda_j + \varepsilon) \right) \text.
	 \end{equation*}
	 Assume that for all $N\in \N$, there exists $n \geq N$ such that $H_n \neq \emptyset$, and again, find a subsequence $(H_{f(n)})_{n\in\N}$ such that none of the sets $H_{f(n)}$, for $n\in\N$, are empty. Let $\mu_n \in H_{f(n)}$ for all $n\in\N$. Again by compactness, $\mu_n$ has a convergent subsequence which must be an eigenvalue of $\Dirac_\infty$, inside $[-\Lambda,\Lambda]$, but not equal to any $\lambda_j$. This is a contradiction by \cite[Theorem 5.2]{Latremoliere22}. So there exist $N \in \N$ such that, if $n\geq N$, then $H_n = \emptyset$, i.e. $\spectrum{\Dirac_n}\cap [-\Lambda,\Lambda]) \subseteq \bigcup_{j=0}^n (\lambda_j - \varepsilon, \lambda_j + \varepsilon)$. 
	 
	 On the other hand, for each $j\in\{0,\ldots,d\}$, again by \cite[Theorem 5.2]{Latremoliere22}, the eigenvalue $\lambda_j$ is the limit of some sequence $(\mu_{n,j})_{n\in\N}$ with $\mu_{n,j} \in \spectrum{\Dirac_n}$ for all $n\in\N$. Let $N_j \in \N$ such that, if $n\geq N_j$, then $|\lambda_j - \mu_{n,j}| < \varepsilon$.  Since $\lambda_j \in (-\Lambda,\Lambda)$ by construction, there exists $M_j \in \N$ such that, if $n\geq M_j$, then $\mu_{n,j} \in (-\Lambda,\Lambda)$, i.e. $\mu_{n,j} \in \spectrum{\Dirac_n} \cap (-\Lambda,\Lambda)$. So, if $n \geq \max\{N_j,M_j\}$, then the distance from $\lambda_j$ to $\spectrum{\Dirac_n} \cap (-\Lambda,\Lambda)|$ is at most $\varepsilon$. 
	 
	 Therefore, if $n\geq\max\{N , N_j, M_j : j\in\{0,\ldots,d\} \} \in \N$, then 
	 \begin{equation*}
	 	\Haus{\C}(\spectrum{\Dirac_n}\cap[-\Lambda,\Lambda], \spectrum{\Dirac_\infty}\cap[-\Lambda,\Lambda]) \leq \varepsilon \text.
	\end{equation*}
	 We thus have concluded our theorem.
\end{proof}

We can rephrase the conclusion of Theorem (\ref{cont-thm}) as follows, as well. If $(\A_n,\Hilbert_n,\Dirac_n)_{n\in\N}$ converges to $(\A_\infty,\Hilbert_\infty,\Dirac_\infty)$ for the spectral propinquity, then there exists $N \in \N$ and $K \in \N$ such that, if $n\geq N$, then there exists a family $(\lambda_{n,j})_{j=1}^K$ of scalars such that $\lambda_{n,1} \leq \lambda_{n,2} \leq \cdots \leq\lambda_{n,K}$, and $\spectrum{\Dirac_n}\cap [-\Lambda,\Lambda] = \{ \lambda_{n,1}, \cdots, \lambda_{n,K} \}$, where the eigenvalues $\lambda_{n,0},\ldots,\lambda_{n,K}$ are repeated according to their multiplicity. Moreover, if
\begin{equation*}
	\varepsilon = \frac{1}{2} \min\{ | \lambda_{\infty,j} - \lambda_{\infty,k} | : j, k \in \{1,\ldots,K\}, \lambda_{\infty,j}\neq\lambda_{\infty,k} \}
\end{equation*}
then, up to making $N$ larger, we have
\begin{equation*}
	\Haus{\C}(\spectrum{\Dirac_n}\cap[-\Lambda,\Lambda], \spectrum{\Dirac_\infty}\cap[-\Lambda,\Lambda]) \leq \varepsilon \text,
\end{equation*}
so if $\lambda_{n,j}\neq\lambda_{n,k}$, then $|\lambda_{n,j} - \lambda_{\infty,k}| > \varepsilon \text.$

Consequently, again using \cite[Theorem 5.2]{Latremoliere22}, we conclude that for each $j \in \{1,\ldots,K\}$, we have 
\begin{equation*}
	\lim_{n\rightarrow\infty} \lambda_{n,j} = \lambda_{\infty,j}\text.
\end{equation*}
In short form: for $n$ large enough, we can label the eigenvalues of $\Dirac_n$ in $[-\Lambda,\Lambda]$, counting multiplicity, as $\lambda_{n,1}\leq\ldots\leq\lambda_{n,K}$ where $K$ does not depend on $N$, including for $n=\infty$, and moreover $(\lambda_{n,j})_{n\in\N}$ converges to $\lambda_{\infty,j}$ for all $j \in \{1,\ldots,K\}$.

\bigskip
We now prove that some natural conditions on a family of metric spectral triples imply convergence for the propinquity. The hypothesis of this theorem were motivated by \cite[Theorem 2.1]{Nowaczyk13}.

\begin{lemma}\label{main-lemma}
	Let $\A$ be a unital separable C*-algebra acting on a separable Hilbert space $\Hilbert$, and $\Omega$ be a topological space. Let $E$ be a dense subspace of $\Hilbert$. We assume that, for each $\sigma \in\Omega$, we are given a metric spectral triple $(\A,\Hilbert,\Dirac_\sigma)$ such that $\dom{\Dirac_\sigma} = E$. For each $\sigma\in\Omega$ and $\xi \in E$, we set
	\begin{equation*}
		\CDN_\sigma(\xi) \coloneqq \norm{\xi}{\Hilbert} + \norm{\Dirac_\sigma\xi}{\Hilbert} \text.
	\end{equation*}
	For each $\sigma \in \Omega$ and $a \in \A$, if $a E \subseteq E$ and $[\Dirac_\sigma, a]$ is bounded on $E$, then we set 
	\begin{equation*}
		\Lip_\sigma(a) = \opnorm{[\Dirac_h,a]}{}{\Hilbert}\text;
	\end{equation*}
	otherwise we set $\Lip_\sigma(a) = \infty$. 
	 
	 Fix $\omega \in\Omega$. If both of the following assertions hold:
	\begin{enumerate}
		\item for all $\varepsilon > 0$, there exists a neighborhood $\Omega_\omega$ of $\omega$ such that if $\sigma \in \Omega$ and $a\in\dom{\Lip_\omega}$, then
		\begin{equation*}
			\left|\Lip_\omega(a) - \Lip_\sigma(a)\right| \leq \varepsilon \Lip_\omega(a)\text,
		\end{equation*}
		\item for all $\varepsilon > 0$, there exists a neighborhood $\Omega_\omega$ of $\omega$ such that if $\sigma \in \Omega$, then
		\begin{equation*}
			\sup_{\substack{\xi \in E \\ \CDN_\omega(\xi) \leq 1}} \norm{\left(\Dirac_h - \Dirac_g\right)\xi}{\Hilbert} < \varepsilon \text,
		\end{equation*}
	\end{enumerate} 
	then
	\begin{equation*}
		\lim_{\substack{\sigma\rightarrow\omega \\ \sigma\in\Omega}} \spectralpropinquity{}((\A,\Hilbert,\Dirac_\sigma), (\A,\Hilbert,\Dirac_\omega)) = 0\text.
	\end{equation*}
\end{lemma}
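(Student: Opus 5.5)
We will construct, for each $\sigma$ close to $\omega$, an explicit metrical tunnel from $\mcc{\A}{\Hilbert}{\Dirac_\sigma}$ to $\mcc{\A}{\Hilbert}{\Dirac_\omega}$. It is built on the direct sum: let $\D \coloneqq \A\oplus\A$ act on $\module{P}\coloneqq\Hilbert\oplus\Hilbert$. Fix $\varepsilon\in(0,1)$ and pick $\sigma$ in the intersection of the neighborhoods given by (1) and (2) for this $\varepsilon$. For $(a,b)\in\sa{\D}$ with $a,b\in\dom{\Lip_\omega}$ (note that (1) forces $\dom{\Lip_\sigma}=\dom{\Lip_\omega}$), we set
\begin{equation*}
	\TLip(a,b) \coloneqq \max\left\{ \Lip_\sigma(a), \Lip_\omega(b), \frac{1}{\varepsilon}\norm{a-b}{\A} \right\} \text.
\end{equation*}
On $\module{P}$ we set
\begin{equation*}
	\TDN(\xi,\eta) \coloneqq \max\left\{ \CDN_\sigma(\xi), \CDN_\omega(\eta), \frac{1}{\varepsilon}\norm{\xi-\eta}{\Hilbert} \right\} \text,
\end{equation*}
and we take the trivial structure $(\C,0)$ on the base algebra, with identity maps. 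The quantum isometries are the coordinate projections. We then check the axioms of Definition (\ref{mcc-def}) and Definition (\ref{qcms-def}) for these seminorms; the Leibniz-type inequalities follow coordinatewise, together with the standard estimate $\norm{ab-a'b'}{}\leq\norm{a}{}\norm{b-b'}{}+\norm{a-a'}{}\norm{b'}{}$.

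The quotient condition is the key use of the hypotheses. Given $a$ with $\Lip_\sigma(a)\leq1$, the pair $(a,a)$ satisfies $\TLip(a,a)\leq\max\{1,\Lip_\omega(a)\}\leq (1-\varepsilon)^{-1}$ by (1). This is not exactly $1$, so we rescale the off-diagonal term by $(1+\varepsilon)$ factors, or equivalently replace $\Lip_\sigma,\Lip_\omega$ in $\TLip$ by $\max\{\Lip_\sigma,(1-\varepsilon)\Lip_\omega\}$-type corrections. We expect this to give quotient seminorms exactly $\Lip_\sigma$ and $\Lip_\omega$. The same adjustment is needed on the module side: for $\xi$ with $\CDN_\sigma(\xi)\leq 1$, the pair $(\xi,\xi)$ has $\CDN_\omega(\xi)\leq \CDN_\sigma(\xi)+\norm{(\Dirac_\sigma-\Dirac_\omega)\xi}{}$, which we control by (2) after first showing that (2) implies $\CDN_\omega\leq(1-\varepsilon)^{-1}\CDN_\sigma$ (absorb the perturbation). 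The extent of the resulting tunnel is then estimated as in the standard bridge argument. Every state $\varphi$ of $\A$ pulled back through the first coordinate is within $\varepsilon$ of the same state pulled back through the second, since $|\varphi(a)-\varphi(b)|\leq\norm{a-b}{}\leq\varepsilon$ when $\TLip(a,b)\leq1$. States of $\D$ are convex combinations of these, so the extent is at most $\varepsilon$.

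The main obstacle is the separation term $\mathrm{sep}$ for the unitary groups on $[0,\varepsilon^{-1}]$ (up to replacing $\varepsilon$ by a suitable $\delta$). For $\xi$ in the $\TDN$-unit ball, we must compare $e^{it\Dirac_\sigma}\xi$ and $e^{it\Dirac_\omega}\eta$ in the $\Kantorovich{\TDN}$ sense, and $\norm{\xi-\eta}{}\leq\varepsilon$. The Duhamel formula gives
\begin{equation*}
	e^{it\Dirac_\sigma}\xi-e^{it\Dirac_\omega}\xi=i\int_0^t e^{i(t-s)\Dirac_\sigma}(\Dirac_\sigma-\Dirac_\omega)e^{is\Dirac_\omega}\xi\,ds \text.
\end{equation*}
Because $e^{is\Dirac_\omega}$ preserves $\CDN_\omega$, hypothesis (2) bounds the integrand by $\delta$, yielding a norm bound of $t\delta\leq\delta/\varepsilon$. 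Choosing $\delta=\varepsilon^2$ through the neighborhoods, every quantity is then $O(\varepsilon)$, and since $\Kantorovich{\TDN}$ is dominated by the Hilbert norm, we get $\spectralpropinquity{}\leq C\varepsilon$ for $\sigma$ near $\omega$, which gives the claimed limit.
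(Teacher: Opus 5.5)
You have the same architecture as the paper: a bridge on $\A\oplus\A$ and $\Hilbert\oplus\Hilbert$ with max-type seminorms, coordinate projections, and a perturbation estimate for the unitary groups. However, two steps of your construction fail as written.

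\textbf{The coordinate projections are not quantum isometries for your $\TLip$, and your proposed repairs do not fix this.} Lifting $a$ to $(a,a)$ only gives $\TLip(a,a)\leq(1-\varepsilon)^{-1}$. Replacing $\Lip_\omega$ by $(1-\varepsilon)\Lip_\omega$ inside $\TLip$ destroys $\TLip(a,b)\geq\Lip_\omega(b)$, so the second quotient is then no longer $\Lip_\omega$. You must lift off the diagonal. Since $\Lip_\omega$ vanishes on $\R\unit_\A$, the way to lower $\Lip_\omega$ while staying close is to contract toward a scalar: take $b=(1-\varepsilon)a+\varepsilon\mu(a)\unit_\A$ for a state $\mu$. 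Then $\Lip_\omega(b)\leq1$ and $\norm{a-b}{\A}=\varepsilon\norm{a-\mu(a)\unit_\A}{\A}\leq\varepsilon\,\qdiam{\A}{\Kantorovich{\Lip_\sigma}}$. So the off-diagonal constant must be $K\varepsilon$, with $K$ a bound on the diameters of $(\StateSpace(\A),\Kantorovich{\Lip_\sigma})$ uniform for $\sigma$ near $\omega$; hypothesis (1) supplies this. That is the role of $K=2\,\qdiam{\A}{\Kantorovich{\Lip_\omega}}+1$ in the paper.

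The diameter cannot be avoided. Take $\Dirac_\omega=(1-\varepsilon)^{-1}\Dirac_\sigma$, which satisfies (1) with tolerance $\varepsilon$. Pick $a$ with $\Lip_\sigma(a)\leq 1$ and states $\varphi,\psi$ with $|\varphi(a)-\psi(a)|$ close to the diameter $D$ of $\Kantorovich{\Lip_\sigma}$. Every $b$ with $\Lip_\omega(b)\leq1$ has $\Lip_\sigma(b)\leq 1-\varepsilon$, so $2\norm{a-b}{\A}\geq|\varphi(a)-\psi(a)|-(1-\varepsilon)D$, which is close to $\varepsilon D$. Your denominator $\varepsilon$ therefore fails once $D>2$.

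On the module side, the naive contraction $(\xi,(1-\varepsilon)\xi)$ does work, because $\norm{\xi}{\Hilbert}\leq\CDN_\sigma(\xi)$. The module off-diagonal constant must then be at least the algebra one, for condition (4) of Definition (\ref{mcc-def}). Note also that (1) only yields $\dom{\Lip_\omega}\subseteq\dom{\Lip_\sigma}$. The reverse inclusion, which both you and the paper use, must come from the setting.

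\textbf{The base of the tunnel cannot be $(\C,0)$ with identity maps.} A quantum isometry of metrical C*-correspondences must satisfy $\inner{\Pi(x)}{\Pi(y)}{}=\rho(\inner{x}{y}{})$. With $\Hilbert\oplus\Hilbert$ a Hilbert $\C$-module and $\rho=\mathrm{id}_{\C}$, the coordinate projection would force $\inner{\xi}{\xi'}{\Hilbert}=\inner{\xi}{\xi'}{\Hilbert}+\inner{\eta}{\eta'}{\Hilbert}$, which is impossible. The paper instead takes the base to be $\C^2$, with these ingredients:
\begin{enumerate}
\item the $\C^2$-valued inner product $\left(\inner{\xi}{\xi'}{\Hilbert},\inner{\eta}{\eta'}{\Hilbert}\right)$;
\item the coordinate projections $j_\omega,j_\sigma$;
\item the seminorm $\Lip[Q](z,w)=|z-w|/(K\varepsilon)$.
\end{enumerate}
This base tunnel has extent at most $K\varepsilon$. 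Condition (3) of Definition (\ref{mcc-def}) then holds precisely because of the term $\norm{\xi-\eta}{\Hilbert}/(K\varepsilon)$ in $\TDN$.

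\textbf{Your separation argument is sound.} It differs from the paper only in using Duhamel's formula instead of Kato's semigroup perturbation theorem. When pairing through the tunnel, the test vectors contribute an extra $\norm{\zeta-\zeta'}{\Hilbert}\leq K\varepsilon$, which is harmless. Your choice of a quadratically small tolerance $\delta=\varepsilon^2$ in (2) is the right bookkeeping. The paper feeds $K\varepsilon$ into (2), which leaves the term $3K(1+2K)t\varepsilon$ of order one for $t$ near $(5K\varepsilon)^{-1}$; its claimed bound $5K\varepsilon$ does not follow, and your scaling is what closes that step.
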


\begin{proof}
	We define the constant $K \coloneqq 2 \qdiam{\A}{\Kantorovich{\Lip_\omega}} + 1$. The addition of $1$ in the definition of $K$ is simply to ensure $K$ is not zero.

	Let $\varepsilon > 0$. Without loss of generality, assume $\varepsilon \leq \min\{1, \frac{1}{K+1} \}$. Let $\Omega_0$ be a neighborhood of $\omega$ in $\Omega$ such that, for all $\sigma \in \Omega_0$, we have:
	\begin{equation*}
		\sup_{\substack{a \in \dom{\Lip_\omega} \\ \Lip_\omega(a)\leq 1}} |\Lip_\sigma(a) - \Lip_\omega(a)| < \varepsilon  \text,
	\end{equation*}
	given by Condition (1). In other words, for all $a\in \dom{\Lip_\omega} = \dom{\Lip_\sigma}$, we have the equivalence of seminorms:
	\begin{equation*}
		(1-\varepsilon)\Lip_\omega(a) \leq \Lip_\sigma(a) \leq (1+\varepsilon)\Lip_\omega(a) \text.
	\end{equation*}	
	Let $\varphi,\psi \in \StateSpace(\A)$. By definition,
	\begin{equation*}
		(1-\varepsilon) \Kantorovich{\Lip_\sigma}(\varphi,\psi) \leq \Kantorovich{\Lip_\omega}(\varphi,\psi) \leq (1+\varepsilon) \Kantorovich{\Lip_\sigma}(\varphi,\psi) \text.
	\end{equation*}
	Therefore,
	\begin{equation*}
		(1-\varepsilon) \qdiam{\A}{\Kantorovich{\Lip_\sigma}} \leq \qdiam{\A}{\Kantorovich{\Lip_\omega}} \leq (1+\varepsilon) \qdiam{\A}{\Kantorovich{\Lip_\sigma}} \text.
	\end{equation*}
Thus, for all $\sigma\in\Omega_0$, we conclude
\begin{equation}\label{qdiam-eq}
	\qdiam{\A}{\Kantorovich{\Lip_\sigma}} \leq 2 \qdiam{\A}{\Kantorovich{\Lip_\omega}} < K \text.
\end{equation}

	We endow $E$ henceforth with the graph norm $\CDN_\omega$ of $\Dirac_\omega$. Let $\Omega_1$ be a neighborhood of $\omega$ in $\Omega$ such that for all $\sigma \in \Omega_1$ and for all $\xi \in E$:
	\begin{equation*}
		\norm{(\Dirac_\sigma - \Dirac_\omega)\xi}{\Hilbert} \leq K \varepsilon \cdot\CDN_\omega(\xi) \text,
	\end{equation*}
	given by Condition (2) and the homogeneity of the norm $\CDN_\omega$ on $E$.
	
	Let $\Omega_2 \coloneqq \Omega_0 \cap \Omega_1$, which is a neighborhood of $\omega$ in $\Omega$. We pick an arbitrary $\sigma \in \Omega_2$.
	
	For all $a,b \in \dom{\Lip_\omega}$, we set:
	\begin{equation*}
		\TLip_\sigma(a,b) \coloneqq \max\left\{ \Lip_\omega(a), \Lip_\sigma(b), \frac{\norm{a-b}{\A}}{K \varepsilon} \right\} \text.
	\end{equation*}
	It is straightforward, and a common proof, that $(\D,\Lip[T])$ is a {\qcms}.

	We will denote the canonical surjections from $\D\coloneqq \A\oplus\A$ onto each summand by $\pi_\omega : (a,b) \in \D\mapsto a\in\A$ and $\pi_\sigma:(a,b)\in\D \mapsto b\in\A$. As a remark about notation, neither $\pi_\sigma$ nor $\pi_\omega$ depends specifically on the points $\omega$ and $\sigma$ in their construction, but this notation aligned with the fact that $\pi_\sigma$ will be a quantum isometry onto $(\A,\Lip_\sigma)$ and $\pi_\omega$ will be a quantum isometry onto $(\A,\Lip_\omega)$.

	If $a\in \dom{\Lip_\sigma}$ with $\Lip_\sigma(a)= 1$, then $\Lip_\omega\left(\frac{1}{1-\varepsilon}a\right) \leq 1$. Fix any $\mu \in \StateSpace(\A)$. Note that 
	\begin{equation*}
		\norm{a-\mu(a)}{\A} \leq \qdiam{\A}{\Kantorovich{\Lip_\sigma}} \leq K
	\end{equation*} 
	by \cite[Proposition 1.6]{Rieffel98a} and Expression \eqref{qdiam-eq}. Thus if $b \coloneqq \frac{1}{1-\varepsilon}a - \frac{\varepsilon}{1-\varepsilon}\mu(a)$, then $\Lip_\sigma(b) = \Lip_\sigma\left(\frac{1}{1-\varepsilon}a\right) \leq 1$, while
	\begin{equation*}
		\norm{a-b}{\A} \leq \frac{\varepsilon}{1-\varepsilon}\norm{a-\mu(a)}{\A} \leq \varepsilon K \text.
	\end{equation*}
	Therefore, $\Lip[T]\left(b,a\right) = 1$. Since  $\Lip_\sigma\circ\pi_\sigma\leq\Lip[T]$ by construction, we conclude that $\pi_\sigma :(\D,\Lip[T])\rightarrow(\A,\Lip_\sigma)$ is a quantum isometry. Similarly, indeed, $\pi_\omega:(\D,\Lip[T])\rightarrow(\A,\Lip_\omega)$ is a quantum isometry as well.
	
	Thus $(\D,\Lip[T],\pi_\omega,\pi_\sigma)$ is a tunnel, whose extent is actually at most $K\varepsilon$ (briefly, if $\varphi\in\StateSpace(\A\oplus\B)$ then $\varphi = t\varphi_1\circ\pi_\omega + (1-t)\varphi_2\circ\pi_\sigma$ with $t\in[0,1]$, $\varphi_1,\varphi_2\in\StateSpace(\A)$; if we set $\psi \coloneqq t\varphi_1 + (1-t)\varphi_2$ then by construction, $\Kantorovich{\TLip_\sigma}(\varphi,\psi\circ\pi_\omega) \leq K \varepsilon$).
	
\medskip

	Now, for all $\xi,\eta \in E$, we define:
	\begin{equation*}
		\TDN(\xi,\eta) \coloneqq \max\left\{ \CDN_\omega(\xi), \CDN_\sigma(\xi),  \frac{\norm{\xi-\eta}{\Hilbert}}{K\varepsilon} \right\} \text.
	\end{equation*}
	It is again easy to check that $\TDN$ is a D-norm on $\Hilbert\oplus\Hilbert$.

	If $\xi \in E$ and $\sigma \in \Omega_2$, then:
	\begin{align*}
		\left| \CDN_\sigma(\xi) - \CDN_\omega(\xi) \right| 
		&= \left| \norm{\xi}{\Hilbert} + \norm{\Dirac_\sigma(\xi)}{\Hilbert} - \norm{\xi}{\Hilbert} - \norm{\Dirac_\omega\xi}{\Hilbert} \right| \\
		&\leq \left|\norm{\xi}{\Hilbert} - \norm{\xi}{\Hilbert}\right| + \left|\norm{\Dirac_\sigma \xi}{\Hilbert} - \norm{\Dirac_\omega\xi}{\Hilbert}\right| \\
		&\leq \norm{ (\Dirac_\sigma - \Dirac_\omega) \xi }{\Hilbert} \\
		&\leq K \varepsilon \CDN_\omega(\xi) \text.
	\end{align*}
	
	Assume now that $\CDN_\omega(\xi) = 1$. Thus $|1 - \CDN_\sigma(\xi)| \leq K \varepsilon$, so $\CDN_\sigma(\xi) \leq 1 + K \varepsilon $. We conclude that:
	\begin{equation*}
		\TDN\left(\xi,\frac{1}{1+K \varepsilon} \xi\right) = 1 \text.
	\end{equation*}

	Let now $\eta\in E$ with $\CDN_\sigma(\eta) = 1$. As above, 
	\begin{equation*}
		\CDN_\omega(\eta) \leq \frac{1}{1-K\varepsilon} \CDN_\sigma(\eta) = \frac{1}{1-K\varepsilon} \text.
	\end{equation*}
	Hence, for all $\eta\in\dom{\Dirac_\sigma}$ with $\CDN_\sigma(\eta) = 1$, we have:
\begin{equation*}
	\TDN\left(\frac{1}{1-K\varepsilon}\eta,\eta\right) = 1 \text.
\end{equation*}	
Let $\Pi_\sigma : (\xi,\eta) \in \Hilbert\oplus\Hilbert\mapsto\xi$ and $\Pi_\omega:(\xi,\eta)\in\Hilbert\oplus\Hilbert \mapsto \eta$. By construction, $(\pi_\sigma,\Pi_\sigma)$ and $(\pi_\omega,\Pi_\omega)$ are module maps. Moreover,  by  homogeneity, and since 
\begin{equation*}
	\TDN(\xi,\eta)\geq \max\{\CDN_\omega(\xi),\CDN_\sigma(\eta)\}\text,
\end{equation*}
we conclude that $(\pi_\sigma,\Pi_\omega)$ and $(\pi_\omega,\Pi_\omega)$ are quantum isometries.

\medskip

For all $(z,w) \in \C^2$, we set:
\begin{equation*}
	\Lip[Q](z,w) \coloneqq \frac{1}{K \varepsilon}|z-w| \text.
\end{equation*}
Of course, $(\C^2,\Lip[Q],j_\omega,j_\sigma)$, where $j_\omega:(z,w)\in\C^2\mapsto z$ and $j_\sigma:(z,w)\mapsto w$, are both quantum isometries, is a tunnel from $\C$ to $\C$ of extent $K\varepsilon$.

A common argument shows that we have constructed a metrical tunnel: 
\begin{equation*}
	(\Hilbert\oplus\Hilbert,\TDN,\D,\Lip[T], \C^2,\Lip[Q], (\Pi_\omega,\pi_\omega,j_\omega), (\Pi_\sigma,\pi_\sigma,j_\sigma))
\end{equation*}
whose extent is, by definition, at most the extent of $(\D,\Lip[T],\pi_\omega,\pi_\sigma)$ and the extent of $(\C^2,\Lip[Q],j_\omega,j_\sigma)$, that is, at most $K \varepsilon$. 

It remains to compute the reach of our metrical tunnel for the action given by exponentiating our Dirac operators.	By \cite[IX, Theorem 2.12, p. 502]{Kato}, with $M=1$, $a=b=K\varepsilon$, $\beta=0$ and $\chi = 1$, we thus obtain:
	\begin{equation*}
		\opnorm{\left(\exp(i t \Dirac_\sigma) - \exp(i t \Dirac_\omega)\right)(1 + i \Dirac_\omega)^{-1}}{}{} \leq t [2 K\varepsilon + K\varepsilon] = 3 K \varepsilon t \text.
	\end{equation*}
	
	Let $\xi \in \dom{\Dirac_\omega}$, with $\CDN_\omega(\xi) \leq 1 + K\varepsilon$. Then
	\begin{equation*}
		\norm{(\Dirac_\omega + i)\xi}{\Hilbert} \leq \norm{\Dirac_\omega\xi}{\Hilbert} + \norm{\xi}{\Hilbert} = \CDN_\omega(\xi) \leq 1 + K\varepsilon\text.
	\end{equation*}
	Therefore:
	\begin{align*}
		\norm{\left(\exp(i t \Dirac_\sigma) - \exp(i t \Dirac_\omega)\right)\xi}{\Hilbert}
		&=\norm{\left(\exp(i t \Dirac^\sigma) - \exp(i t \Dirac_\omega)\right)(i\Dirac_\omega + 1)^{-1} (1+\Dirac_\omega) \xi}{\Hilbert} \\
		&\leq \opnorm{\left(\exp(i t \Dirac_\sigma) - \exp(i t \Dirac_\omega)\right)(1 + i \Dirac_\omega)^{-1}}{}{} \CDN_\omega(\xi) \\
		&\leq 3 K t \varepsilon(1+\varepsilon) \leq 3K(1+2K)\varepsilon \text. 
	\end{align*}

	Therefore, if $\TDN(\xi,\eta)\leq 1$ and $\TDN(\zeta,\zeta') \leq 1$, then
	\begin{align*}
		\left|\inner{\exp(it\Dirac_\sigma) \eta}{\zeta'}{\Hilbert} - \inner{\exp(it\Dirac_\omega)\xi}{\zeta}{\Hilbert}\right| 
		&\leq \left|\inner{\exp(it\Dirac_\sigma)(\eta-\xi)}{\zeta'}{\Hilbert}\right| \\
		&\quad + \left|\inner{\exp(it\Dirac_\sigma)\xi - \exp(it\Dirac_\omega)\xi}{\zeta}{\Hilbert}\right| \\
		&\quad + \left|\inner{\exp(it\Dirac_\sigma)\xi}{\zeta'-\zeta}{\Hilbert}\right| \\
		&\leq \norm{\eta-\xi}{\Hilbert} \norm{\zeta'}{\Hilbert} \\
		&\quad + \norm{\left(\exp(it\Dirac_\sigma) - \exp(it\Dirac_\omega)\right)\xi}{\Hilbert}\norm{\zeta}{\Hilbert} \\
		&\quad + \norm{\xi}{\Hilbert}\norm{\zeta-\zeta'}{\Hilbert} \\
		&\leq K\varepsilon + 3 K(1+2K) t \varepsilon + K \varepsilon  \text.
	\end{align*}
	So if $t \leq \frac{1}{5 K \varepsilon}$, recalling from our definition at the beginning of this proof that $K \geq 1$, we obtain:
	\begin{equation*}
		\left|\inner{\exp(it\Dirac_\sigma) \eta}{\zeta'}{\Hilbert} - \inner{\exp(it\Dirac_\omega)\xi}{\zeta}{\Hilbert}\right|  \leq 5 K \varepsilon \text. 
	\end{equation*}

	Consequently, we have proven that for all $\sigma\in\Omega_2$:
	\begin{equation*}
		\spectralpropinquity{}((\A,\Hilbert,\Dirac_\sigma),(\A,\Hilbert,\Dirac_\omega)) \leq \tunnelextent{\tau} \leq 5K \varepsilon \text.
	\end{equation*}
	Therefore, our lemma is proven.
\end{proof}

We now turn to applications for the results in this section.

\section{Metric Spectral Triples from Compact Lie group actions}

We now present a noncommutative example of application of Lemma (\ref{main-lemma}). Quantum tori are the foundational examples for noncommutative geometry \cite{Connes80,Rieffel90}, and their metric properties were explored in detail by Rieffel \cite{Rieffel98a, Rieffel00}, then by the author \cite{Latremoliere13c,Latremoliere15c,Latremoliere17a,Latremoliere18a,Latremoliere21a}. In \cite{Rieffel98a}, Rieffel constructed a metric spectral triple over any unital C*-algebra on which a compact Lie group acts ergodically. The construction depends on an inner product on the Lie algebra of the acting Lie group, and we prove here that the spectrum of Rieffel's Dirac operators is indeed continuous with respect to this inner product. 

\medskip

In this section, we fix a unital C*-algebra $\A$ and a strongly continuous action of a compact Lie group $G$ on $\A$, such that
\begin{equation*}
	\left\{ a \in \A : \forall g \in \A  \quad \alpha^g(a) = a \right\} = \C\unit_\A\text.
\end{equation*}
We also fix a \emph{faithful} state $\eta$ of $\A$.

Let $\mathfrak{G}$ be the Lie algebra of $G$. By \cite{Bratteli79}, there exists a dense *-subalgebra $\A_\infty$ of $\A$ such that, for all $a \in \A_\infty$ and for all $X \in \mathfrak{G}$, the following limits exist:
\begin{equation*}
	da (X) \coloneqq \lim_{t\rightarrow 0} \frac{1}{t}\left(\alpha^{\exp(t X)} a - a \right) \text.
\end{equation*}
The map $X \in \mathfrak{G} \mapsto da(X)$ is linear, and thus $da \in \A_\infty\otimes \mathfrak{G}'$ where $\mathfrak{G}'$ is the dual of $\mathfrak{G}$. 

\medskip

Let now $h$ be any inner product on $\mathfrak{G}'$, and let $C_h$ be the Clifford algebra defined by $h$ over $\mathfrak{G}'$. For our purpose, we wish to have a systematic way to choose a representation of $C_h$ regardless of the choice of $h$, which is possible since the isomorphism class of $C_h$ only depends on $\dim\mathfrak{G}$.

If $\dim\mathfrak{G}$ is even, then $C_h$ is isomorphic to the full matrix algebra of $2^{\dim\mathfrak{G}/2}\times 2^{\dim\mathfrak{G}/2}$ matrices; let $\pi$ be its unique irreducible representation up to unitary conjugation, and fix some nonzero natural number $p$.

If $\dim\mathfrak{G}$ is odd, then $C_h$ is the direct sum $M\oplus M'$ of two full matrix algebras of matrices of size $2^{(\dim\mathfrak{G}-1)/2}\times 2^{(\dim\mathfrak{G}-1)/2}$; let $\pi_1$ and $\pi_2$ be the unique irreducible representations, up to unitary conjugation, of $M$ and $M'$, and fix some nonzero natural numbers $p$ and $q$. 

We now let $c_h$ be (up to unitary equivalence) the finite dimensional *-representation of $C_h$ on some Hermitian space $S_h$ obtained as the direct sum of $p$ times the representation $\pi$ (if $\dim{\mathfrak{G}}$ is even) or $p$ times the representation $\pi_1$ plus $q$ times the representation $\pi_2$. Note that $c_h$ is faitfhul by construction. 

\medskip

We then construct an operator $D_h$ on $\A_\infty\otimes S_h$ as the result of the following composition:
\begin{equation*}
	A_\infty \otimes S_h \xrightarrow{d \otimes \mathrm{id}_S} A_\infty\otimes\mathfrak{G}'\otimes S_h \xrightarrow{\mathrm{id}_{\A_\infty}\otimes j \otimes \mathrm{id}_{S_h}} A_\infty\otimes C_h \otimes S_h \xrightarrow{\mathrm{id}_{\A_\infty}\otimes m_h} A_\infty \otimes S_h \text,
\end{equation*}
where $\mathrm{id}_{\A_\infty}$ and $\mathrm{id}_{S_h}$ are the identify maps, $j : \mathfrak{G}'\rightarrow C_h$ is the canonical injection, and $m_h : e\otimes s \in C_h \otimes S_h \mapsto c_h(e)s \in S_h$.

\medskip

Let now $W_h$ be the Hilbert space completion of $A\otimes S_h$ for the unique inner product such that, for all $a,b \in \A_\infty$ and $s,t \in S_h$:
\begin{equation*}
	\inner{a\otimes s}{b\otimes t}{} = \inner{s}{t}{S_h}\eta(b^\ast a) \text.
\end{equation*}
Note that $A\otimes S_h$ injects in $W_h$ since $\eta$ is faithful. As a result, $D_h$ is now an operator from $A_\infty\otimes S_h$ to $W_h$, and it is in fact essentially self-adjoint. Its closure, which is self-adjoint, is denoted by $\Dirac_h$ henceforth.

As shown in \cite{Rieffel98a}, for any orthonormal basis $e_1,\ldots,e_d$ of $
S_h$, with $e^1,\ldots,e^d$ its dual basis using the Hilbert product $h$, we have a simple expression of $\Dirac_h$:
\begin{equation*}
	\Dirac_h = \sum_{j=1}^d d_{e_j}\otimes c_h(e^j) \text.
\end{equation*}

\medskip

As we took care of choosing the representations of our Clifford algebras in a systematic way, we can in fact construct a unitary between $W_h$ and $W_g$, allowing us to work, in essence, on the same (noncommutative) spinor bundles over $A_\infty$. We wish to construct these unitary equivalences in a systematic manner again. 

\medskip

To this end, we fix an inner product $g$ on $\mathfrak{G}'$, and we fix an orthonormal basis $e_1(g), \ldots e_d(g)$ of $\mathfrak{G}'$ for $g$.

let now $h$ be any arbitrary inner product on $\mathfrak{G}'$. We denote by $e_1(h),\ldots, e_d(h)$ be the orthonormal basis for $h$ obtained from $e_1(g), \ldots e_d(g)$ by the standard Gram-Schmidt algorithm. By functoriality, there exists a unique *-isomorphism $\beta_g^h : C_h \rightarrow C_g$ such that $\beta_g^h(e_j(h)) = e_j(g)$ for all $j\in\{1,\ldots,d\}$. 

The representation $c_g\circ \beta_g^h$ of $C_h$ is unitarily equivalent to $c_h$ by our contruction (they have the same decomposition in irreducible representations), and thus there exists a unitary $V_g^h : S_h \rightarrow S_g$ such that:
\begin{equation*}
	(V_g^h)^\ast (c_g\circ\beta_g^h) V_g^h = c_h \text.
\end{equation*}
Let now $U_g^h$ be the unique extension of the operator $\mathrm{id}_\A\otimes V_g^h$ from $\A\otimes S_h$ to $\A\otimes S_g$ by continuity. Of course, $U_g^h$ is unitary. We define
\begin{equation*}
	\Dirac_g^h \coloneqq (U^h_g)^\ast \Dirac_h (U^h_g) \text.
\end{equation*}
Note that $\Dirac_g^g = \Dirac_g$ by construction. Last, we note the local expression:
\begin{equation*}
	\Dirac_g^h = \sum_{j=1}^d d_{e_j(h)} \otimes c_g(e_j(g)) \text.
\end{equation*}
In the sequel, we will write $c$ for $c_g$.

As a remark, the local expression of $\Dirac_g^h$ is not invariant by change of basis like the expression of $\Dirac_h$ which can be obtained for any orthonormal basis; the expression of $\Dirac_g^h$ does depend on the choice of bases we made.

Of course, $(\A,W_g,\Dirac_g^h)$ is a metric spectral triple unitarily equivalent to $(\A,W_h,\Dirac_h)$, and in particular,
\begin{equation*}
	\spectralpropinquity{}((\A,W_g,\Dirac_g^h), (\A,W_h,\Dirac_h)) = 0 \text.
\end{equation*}

Our goal now is to prove the continuity of $h \mapsto (\A,W_g,\Dirac_g^h)$ for the spectral propinquity.

\medskip

We note a few simple but helpful inequalities which we will use later on in this section. First, note that
\begin{equation*}
	\partial_{e_j(h)} \otimes c(e_j(g)) = \frac{1}{2} \left(  (1\otimes c(e_j(g)))\Dirac_g^h + \Dirac_g^h (1\otimes c(e_j(g)))   \right) \text,
\end{equation*}
since $c(e_j(g))c(e_k(g)) + c(e_k(g))c(e_j(g))$ is twice the Kroenecker symbol for $j,k$. So, for $\xi$, we note
\begin{align*}
	\norm{\partial_{e_j(g)}\xi}{} \leq  \norm{\Dirac_g^h\xi}{}
\end{align*}
since $\opnorm{c(e_j(g))}{}{} = 1$. Moreover, by linearity, and since $c_h(e_j(g))$ commutes with the action of $a$ on $W_g$:
\begin{equation*}
	\opnorm{[\partial_{e_j(h)},a]}{}{} \leq \opnorm{[\Dirac_g^h,a]}{}{} \text.
\end{equation*}

\medskip

Let
\begin{equation*}
	G_{h,j} \coloneqq 
\begin{pNiceMatrix}
	h(e_1(g), e_1(g)) & \Cdots & h(e_j(g),e_1(g)) \\
	\Vdots & & \Vdots \\
	h(e_1(g),e_j(g)) & \Cdots & h(e_j(g),e_j(g)) 
\end{pNiceMatrix}
\end{equation*}
and
\begin{equation*}
D_{h,j} \coloneqq 
\begin{pNiceMatrix}
	h(e_1(g), e_1(g)) & \Cdots & h(e_j(g),e_1(g)) \\
	\Vdots & & \Vdots \\
	h(e_1(g),e_{j-1}(g)) & \Cdots & h(e_{j}(g),e_{j-1}(g)) \\
	1 & \Cdots & 1 
\end{pNiceMatrix}\text.
\end{equation*}
Namely, $G_{h,j}$ is the Gram matrix for the inner product $h$ and the family $e_1(g),\ldots,e_j(g)$.  Now, write $D_{h,j}^{m,n}$ for the $(m,n)$ minor matrix from $D_{h,j}$, i.e. the matrix obtained from $D_{h,j}$ by removing the $m$-th row and $n$-th column.

Since $e_1(h),\ldots,e_d(h)$ is derived from $e_1(g),\ldots,e_d(g)$ by the Gram-Schmidt process, we obtain, for all $j \in \{1,\ldots,d\}$:
\begin{equation*}
	e_j(h) \coloneqq \frac{1}{\sqrt{\det G_{h,j} G_{h,j-1}}} \sum_{k=1}^{j} \det(D_{h,j}^{j,k}) (-1)^{j+k} e_k(g) \text. 
\end{equation*}

Consequently, since the functions $h \mapsto D_{h,j}$ and $h \mapsto G_{h,j}$ are continuous for all $j \in \{1,\ldots,d\}$, so is their various determinants and cofactors, and therefore, $h \mapsto (e_1(h),\ldots,e_d(h))$ is continuous as well. We will write 
\begin{equation*}
	f_{j,k} \coloneqq h \mapsto \frac{1}{\sqrt{\det(G_{h,j}G_{h,j-1})}} \det(D_{h,j}^{j,k}) (-1)^{j+k}
\end{equation*}
for all $j \in \{1,\ldots,d\}$ and $k \in \{1,\ldots,j\}$, so that 
\begin{equation*}
	e_j(h) = \sum_{k=1}^{j} f_{j,k}(h) e_k(g) \text,
\end{equation*}
and $f_{j,k}$ is continuous, and moreover, $f_{j,k}(g)  = 0$ if $k<j$ and $f_{j,j}(g) = 1$.

\medskip

By linearity, we conclude that
\begin{equation*}
	\partial_{e_j(h)} = \sum_{k=1}^{j-1} f_{j,k}(h) \partial_{e_k(g)} \text.
\end{equation*}
Therefore, 
\begin{align*}
	\norm{(\partial_{e_j(h)} - \partial_{e_j(g)} )\xi}{}
&\leq \sum_{k=1}^{j-1} |f_{j,k}(h)| \norm{\partial_{e_j(g)}\xi} + |f_{j,j}(h)-1|\norm{\partial_{e_j(g)}\xi}{}  \\
&\leq \left(\sum_{k=1}^{j-1} j |f_{j,k}(h)| + |f_{j,j}(h)-1|\right) \norm{\Dirac_g \xi}{} \text.
\end{align*}
Let us write $f(h) \coloneqq \sum_{k=1}^{j-1} j |f_{j,k}(h)| + |f_{j,j}(h)-1$ for all $h$, so that
\begin{equation*}
	\norm{ (\partial_{e_j(h)}-\partial_{e_j(g)})\xi}{} \leq f(h) \norm{\Dirac_g\xi}{}
\end{equation*}
and $\lim_{h\rightarrow g} f(h) = 0$.

Consequently,
\begin{equation*}
	\norm{(\Dirac_h - \Dirac_g)\xi}{} \leq d f(h) \norm{\Dirac_g\xi}{} \text.
\end{equation*}

Thus, we have proven that Condition (2) of Lemma (\ref{main-lemma}) holds.

\medskip
Similarly, for all $a\in\A_\infty$:
\begin{align*}
	\opnorm{[\Dirac_g^h - \Dirac_g,a]}{}{}
&\leq \sum_{j=1}^d \opnorm{[\partial_{e_j(h)} - \partial_{e_j(g)},a]}{}{} \\
&\leq \sum_{j=1}^d \left( \sum_{k=1}^{j-1} |f_{j,k}(h)|  \opnorm{[\partial_{e_k(g)},a]}{}{} + |f_{j,j}(h)-1| \opnorm{[\partial_{e_j(g)},a]}{}{} \right) \\
&\leq \sum_{j=1}^d f(h) \opnorm{[\Dirac_g,a]}{}{} \text.
\end{align*}
Therefore, we may apply Lemma (\ref{main-lemma}) to obtain our main result for this section.

\medskip

\begin{theorem}
	Endow the space of metric spectral triples with the spectral propinquity. If $\A$ is a unital C*-algebra, and if $G$ is a compact Lie group, and if $\mathscr{I}$ is the space of all inner products on the Lie algebra $\mathfrak{G}$ of $G$, then
	\begin{equation*}
		g \in \mathscr{I} \mapsto \left(\A,\Hilbert_{g},\Dirac_{g}\right)
	\end{equation*}
	is continuous. In particular, if  $(g_n)_{n\in\N}$ converge to  $g$, then
	\begin{equation*}
		\spectrum{\Dirac_{g}} = \left\{ \lim_{n\rightarrow\infty}\lambda_n : (\lambda_n)_{n\in\N}\in c_1(\N)\text{ and }\forall n\in\N \quad \lambda_n \in \spectrum{\Dirac_{g_n}} \right\}
	\end{equation*}
	and moreover, for all $\Lambda\in(0,\infty)\setminus\spectrum{\Dirac_g}$, and for all $\varepsilon > 0$, there exists $N\in\N$ such that, if $n \geq N$, then
	\begin{equation*}
		\sum_{\lambda\in\spectrum{\Dirac_{g_n}} \cap [-\Lambda,\Lambda]} \multiplicity{\lambda}{\Dirac_{g_n}} = \sum_{\lambda\in\spectrum{\Dirac_{g}} \cap [-\Lambda,\Lambda]} \multiplicity{\lambda}{\Dirac_{g}} < \infty
	\end{equation*}
	and
	\begin{equation*}
		\Haus{\C}(\spectrum{\Dirac_{h_n}}\cap [-\Lambda,\Lambda], \spectrum{\Dirac_{g}\cap [-\Lambda,\Lambda]}) < \varepsilon \text.
	\end{equation*}
\end{theorem}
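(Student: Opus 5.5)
The plan is to reduce the theorem to Lemma (\ref{main-lemma}) and then to Theorem (\ref{cont-thm}). Fix $g\in\mathscr{I}$ and take $\Omega=\mathscr{I}$ with its topology as an open subset of the finite dimensional space of symmetric bilinear forms. For each $h\in\Omega$, replace $(\A,\Hilbert_h,\Dirac_h)$ by the unitarily equivalent triple $(\A,W_g,\Dirac_g^h)$ built from the unitaries $U_g^h$. Since the spectral propinquity is a metric up to unitary equivalence, and in particular satisfies the triangle inequality, we have
\begin{equation*}
\spectralpropinquity{}((\A,\Hilbert_h,\Dirac_h),(\A,\Hilbert_g,\Dirac_g)) = \spectralpropinquity{}((\A,W_g,\Dirac_g^h),(\A,W_g,\Dirac_g)) \text.
\end{equation*}
So it suffices to check the hypotheses of Lemma (\ref{main-lemma}) for the family $h\mapsto\Dirac_g^h$ on the fixed Hilbert space $W_g$.

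\textbf{Common domain.} Each $\Dirac_g^h$ is a combination $\sum_{j}\sum_{k\leq j} f_{j,k}(h)\,\partial_{e_k(g)}\otimes c(e_j(g))$ of the fixed first-order operators $\partial_{e_k(g)}\otimes c(e_j(g))$. The inequality $\norm{\partial_{e_j(g)}\xi}{}\leq\norm{\Dirac_g\xi}{}$ gives $\dom{\Dirac_g}\subseteq\dom{\Dirac_g^h}$. For the reverse inclusion, apply the same argument with the roles of $g$ and $h$ exchanged. Since the Gram–Schmidt change of basis $(f_{j,k}(h))$ is invertible and triangular, each $\partial_{e_k(g)}$ is a combination of the $\partial_{e_j(h)}$, and each of these is bounded by $\Dirac_g^h$. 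Hence every $\dom{\Dirac_g^h}$ equals $E\coloneqq\dom{\Dirac_g}$.

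\textbf{Condition (2).} Using the estimate displayed just before the theorem,
\begin{equation*}
\norm{(\Dirac_g^h-\Dirac_g)\xi}{}\leq d f(h)\norm{\Dirac_g\xi}{}\leq d f(h)\,\CDN_g(\xi) \text.
\end{equation*}
Since $f(h)\to 0$ as $h\to g$, this is Condition (2).

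\textbf{Condition (1).} The commutator estimate gives
\begin{equation*}
|\Lip_h(a)-\Lip_g(a)|\leq\opnorm{[\Dirac_g^h-\Dirac_g,a]}{}{}\leq d f(h)\Lip_g(a)
\end{equation*}
for $a\in\A_\infty$. The main obstacle is the domain of the Lipschitz seminorms. One must show that $\dom{\Lip_h}=\dom{\Lip_g}$, and that the estimate extends from $\A_\infty$ to all $a$ with $[\Dirac_g,a]$ bounded. The plan is to apply the symmetric estimate: the bound $\opnorm{[\partial_{e_j(g)},a]}{}{}\leq\opnorm{[\Dirac_g,a]}{}{}$, obtained by anticommuting with $1\otimes c(e_j(g))$, holds for any $a$ preserving $E$ with bounded commutator, not only for $a\in\A_\infty$. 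Combined with the invertible triangular change of basis, this shows that $a\mapsto[\Dirac_g^h,a]$ is bounded exactly when $a\mapsto[\Dirac_g,a]$ is, with the stated inequality. The same argument also gives the reverse inequality with constant $1/(1-df(h))$. Lemma (\ref{main-lemma}) then yields continuity at $g$.

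\textbf{Spectral consequences.} For a sequence $g_n\to g$, the multiplicity count and the Hausdorff convergence on $[-\Lambda,\Lambda]$ follow directly from Theorem (\ref{cont-thm}). For the description of $\spectrum{\Dirac_g}$ as the set of limits of convergent sequences $\lambda_n\in\spectrum{\Dirac_{g_n}}$, there are two inclusions. The inclusion "limits lie in the spectrum" is \cite[Theorem 5.2]{Latremoliere22}. For the converse, given $\lambda\in\spectrum{\Dirac_g}$, choose $\Lambda>|\lambda|$ with $\Lambda\notin\spectrum{\Dirac_g}$, which is possible because the spectrum is discrete. The Hausdorff convergence on $[-\Lambda,\Lambda]$ then provides points $\lambda_n\in\spectrum{\Dirac_{g_n}}$ converging to $\lambda$.
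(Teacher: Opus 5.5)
Your proposal is correct and follows the paper's own route. You transport everything to $W_g$ via the unitaries $U_g^h$, verify Conditions (1) and (2) of Lemma (\ref{main-lemma}) from the Gram--Schmidt estimates with constant $d f(h)\to 0$, and read off the spectral statements from Theorem (\ref{cont-thm}) together with \cite[Theorem 5.2]{Latremoliere22}. Your additional checks that every $\Dirac_g^h$ has domain $\dom{\Dirac_g}$ and that $\dom{\Lip_h}=\dom{\Lip_g}$ supply hypotheses of the lemma that the paper uses without verifying; they go through as you describe, and the domain comparison only needs $\norm{\partial_{e_j(g)}\xi}{}$ bounded by the graph norm $\CDN_g(\xi)$.
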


\begin{remark}
	The topology on $\mathscr{I}$ is simply the topology induced by identifying $\mathscr{I}$ with the space of definite positive matrices over $\R^{\dim\mathfrak{G}}$, endowed with its usual operator norm topology (or any norm since $\mathfrak{G}$ is finite dimensional).
\end{remark}

\section{Continuity from the $C^1$ topology on Riemannian metrics for the spectral propinquity}

We now investigate the depency of the spectrum of  Dirac operators for almost commutative models over a fixed Riemannian manifold, as a function of the Riemannian metric and the Dirac operator on the finite dimensional factor. These models are used for the work on the spectral standard model as developed by Connes. Therefore, our work leads to a stability result about such models, where quantities of physical interest depend on the spectral property of the associated Dirac operators. 

We begin this study with the commutative factor. Thus, we prove that our Lemma (\ref{main-lemma}) recovers the continuity of the spectrum of the Dirac operator when the metric varies in the $C^1$-topology, though using a different method: instead of using holomorphic families and Rellich theorem, as found for instance in \cite{BG}, we make use of Lemma (\ref{main-lemma}) which, in turn, uses semigroups of operators. 

We will rely upon computations made in \cite{BG} when convenient. However, motivated both by the need for some explicit computations to handle the new almost commutative situation, and by the desire to explicitly illustrate where our method diverges, we will expand some efforts in writing down the explicit formulas we need for our result. The reader could consider the previous section as a toy model in the commutative case (note that our previous section allows for noncommutative C*-algebras, even simple ones): in this restricted case, we would work with $\A = C(G)$ for $G$ any compact Lie group, in place of the more general $C(M)$ for a compat connected spin manifold $M$ as we shall do here. In the former case, the tangent bundle is ``trivial'' (the module used there is free over the C*-algebra), while here, our bundles will be nontrivial; the metric is constant fiberwise in the former case (given by the choice of an inner product over the Lie algebra), while it will be allowed to vary (smoothly) fiber-to-fiber here, introducing the need to work in a topology over the space of metric which includes not only the metric itself, as in the former case, but also its derivatives, i.e. we will work in the $C^1$ topology. In addition, the analogue of a volume form in the previous section was some faithful state chosen arbitrarily, while in the current section, we will use the actual volume form induced by the Riemannian metric, which varies as a function of the metric as well. We will again consturct  a unitary between the Hilbert spaces of spinors, this time following the method of \cite{BG}, which seems easier to use than our Gram-Schmidt approach in the previous section for the case of a non-trivial bundle, though ultimately the main requirement we seek in all these cases is that the choice of the unitary between spinors depends smoothly on the metric.

\medskip

Let us henceforth fix a connected, compact, spin closed manifold $M$ of dimension $d > 0$ (the case of dimension $0$ being trivial here). Let $g$ be a Riemannian metric over $M$. We denote the spinor bundle over $M$ associated with $g$ by $\Sigma^g M$. Now, denote by $\Gamma_{\infty}(\Sigma^g M)$ the $C^\infty(M)$-module of smooth (i.e. $C^\infty$) sections of $\Sigma^g M$. Our base reference for these constructions is \cite{Friedrich}.

Let $\Cl{g}$ be the vector bundle over $M$ whose fiber at $m \in M$ is the Clifford algebra $\Cl{T_m M,g_m}$ of the fiber $T_m M$ with the inner product $g_m$. For any $\gamma \in \Cl{g}$ and spinor field $s \in \Sigma^g M$, we define the pointwise action of $\gamma$ on $s$ by 
\begin{equation*}
	c(\gamma)s : m \in M \mapsto \gamma_m s(m) \in \Sigma^g_m M \text;
\end{equation*}
of course, $c(\gamma)$ maps $\Gamma_\infty(\Sigma^g M)$ to itself. Let us also define the map $m : \Cl{g} \otimes \Gamma_\infty(\Sigma^g M) \rightarrow \Gamma_\infty(\Sigma^g M)$ by setting, for all $\gamma \in \Cl{g}$ and $s \in \Gamma_\infty(\Sigma^g M)$ 
\begin{equation*}
	m(\gamma,s) \coloneqq c(\gamma)s  \text.  
\end{equation*}
We define the Atiah-Singer Dirac operator $D_g$ as the composition:
\begin{equation}\label{Dirac-def}
	D_g : \Gamma_\infty(\Sigma^g M) \xrightarrow{\slashed{\nabla}^g} \Gamma_\infty(T^\ast M) \otimes \Gamma_\infty(\Sigma^g M) \xrightarrow{j\otimes \mathrm{id}} \Cl{g} \otimes \Gamma_\infty(\Sigma_g M) \xrightarrow{m} \Gamma_\infty(\Sigma_g M) \text,
\end{equation}
 where $\slashed{\nabla}^g$ is the spin connection (i.e. the lift of the Levi-Civita connection for $g$ to$\Sigma^g M$), and $j$ is the composition of the musical morphism sending $\omega \in T^\ast M$ to $X \in TM$ such that $g(X,\cdot) = \omega$, with the canonical injection of $TM$ into $\Cl{g}$. We will work with the closure of $D_g$, denoted by $\Dirac_g$, which is a self-adjoint operator on a dense domain $\Gamma_{H_1}(\Sigma^g M)$ of the Hilbert space $\Gamma_{L^2}(\Sigma^g M)$ obtained by completing $\Gamma_\infty(\Sigma^g M)$ for its natural inner product (described below). Of course, $\Gamma_{H_1}(\Sigma^g M)$ is a Banach space for the graph norm of $\Dirac_g$ since $\Dirac_g$ is self-adjoint.

The dependency of the Dirac operator on the underlying Riemannian metric is the subject of many papers, including \cite{BG,Nowaczyk13,Maier97}. Of core interest for us is the following result \cite[Theorem 2.1]{Nowaczyk13}, which itself immediately borrow from \cite{BG,Maier97}. In this work, the space $\mathscr{R}(M)$ of Riemannian metrics over $M$ is endowed with the so-called $C^1$-topology, which is informally the topology where, in each chart of $M$, the components of the metrics vary according to the $C^1$ norm. We will give a detailed presentation of this topology later in this section.

\begin{theorem}[\cite{Nowaczyk13}]\label{BG-thm}
	Fix a Riemannian metric $g \in \mathcal{R}(M)$. For every $h \in \mathcal{R}(M)$, there exists a unitary $\widehat{\beta_g^h} : \Gamma_{L^2}(\Sigma^h M) \rightarrow \Gamma_{L^2}(\Sigma^g M)$ such that, if we set:
	\begin{equation*}
		\Dirac_g^h \coloneqq \beta_g^h \Dirac_h (\beta_g^h)^\ast
	\end{equation*}
	then $\dom{\Dirac_g^h} = \dom{\Dirac_g}$, and the map
	\begin{equation*}
		h \in \mathcal{R}(M) \longmapsto \Dirac_g^h
	\end{equation*}
	is continuous (in fact, $C^1$) from $\mathcal{R}(M)$ to $\B(\Gamma_{H^1}(\Sigma^g M), \Gamma_{L^2}(\Sigma^g M))$.
\end{theorem}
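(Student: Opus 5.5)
The plan is to follow the Bourguignon--Gauduchon construction of the identification of spinor bundles, and then check continuity directly from the local formula for the Dirac operator.

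\emph{Step 1: the unitary.} Given $g$ and $h$, let $A_g^h$ be the endomorphism of $TM$ defined by $h(X,Y) = g(A_g^h X, Y)$. It is $g$-self-adjoint and positive, so we set $b_g^h \coloneqq (A_g^h)^{-1/2}$. This is an isometry of vector bundles $(TM,g)\rightarrow(TM,h)$, and it depends smoothly on $h$ pointwise. Moreover, $h\mapsto b_g^h$ is continuous from $\mathcal{R}(M)$ with the $C^1$ topology to $C^1$ sections of $\mathrm{End}(TM)$, since the square root is analytic on positive operators.

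The map $b_g^h$ maps $g$-orthonormal frames to $h$-orthonormal frames, so it induces an isomorphism of $SO$-frame bundles. Because the spin structure is topological (the $\mathrm{GL}^+$ spin structure restricts), this lifts to an isomorphism of spin principal bundles, hence to a fiberwise isometry $\beta_g^h : \Sigma^g M \rightarrow \Sigma^h M$ intertwining Clifford multiplication: $\beta_g^h(X\cdot s) = b_g^h(X)\cdot\beta_g^h(s)$.

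The volume forms differ by the density $\rho_g^h = \sqrt{\det A_g^h}$. We therefore set $\widehat{\beta_g^h}(s) \coloneqq (\rho_g^h)^{1/2}(\beta_g^h)^{-1}s$ as a map $\Gamma_{L^2}(\Sigma^h M)\rightarrow\Gamma_{L^2}(\Sigma^g M)$, which is unitary by a change of variables.

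\emph{Step 2: the local formula.} In a local $g$-orthonormal frame $(e_j)$, the vectors $b_g^h e_j$ form an $h$-orthonormal frame. The spin connection for $h$ is written with the Christoffel symbols $\Gamma^h$, which involve only the first derivatives of $h$. Conjugating, we get
\begin{equation*}
\Dirac_g^h = \sum_j c(e_j)\,\partial_{b_g^h e_j} + B_g^h,
\end{equation*}
where $B_g^h$ is a zeroth order endomorphism. It is built from $\Gamma^h$, $b_g^h$, its first derivatives, and $d\log\rho_g^h$, and is continuous in $h$ for the $C^1$ topology. When $h=g$ we recover $\Dirac_g$.

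The principal part is first order with coefficients depending continuously on $h$ in $C^0$. Hence, with a partition of unity subordinate to finitely many charts (compactness of $M$), we obtain
\begin{equation*}
\norm{(\Dirac_g^h-\Dirac_g)\xi}{} \leq \delta(h)\left(\norm{\xi}{H^1}+\norm{\xi}{}\right),
\end{equation*}
where $\delta(h)\rightarrow 0$ as $h\rightarrow g$ in $C^1$. The $H^1$ norm built from $\slashed{\nabla}^g$ is equivalent to the graph norm of $\Dirac_g$ by elliptic regularity on the closed manifold $M$. The same estimate yields $\dom{\Dirac_g^h}=\dom{\Dirac_g}=\Gamma_{H^1}$, using ellipticity of $\Dirac_g^h$ and the uniform equivalence of the Sobolev norms.

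Together these give continuity of $h\mapsto\Dirac_g^h$ into $\B(\Gamma_{H^1}(\Sigma^g M),\Gamma_{L^2}(\Sigma^g M))$. The $C^1$ claim follows by differentiating the explicit formula in $h$, but it is not needed later.

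\emph{Main obstacle.} I expect the hardest step to be making the lift $\beta_g^h$ canonical and smooth in $h$. This requires showing that the spin bundles for different metrics are isomorphic through the lifted $b_g^h$ compatibly with the fixed spin structure. The cleanest route is to connect $g$ and $h$ along the affine path $g+t(h-g)$ and use homotopy invariance of the covering; alternatively one can quote the generalized-cylinder construction of BG directly. After that, bounding the zeroth order term $B_g^h$ explicitly is routine but tedious.
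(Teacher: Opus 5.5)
Your proposal is correct and follows essentially the same route as the paper: the Bourguignon--Gauduchon square-root identification of $(TM,g)$ with $(TM,h)$, lifted to the spinor bundles, followed by a density correction for the change of volume form. The paper then compares the coefficients of the conjugated Dirac operator chart by chart and bounds the difference against an $H^1$ norm equivalent to the graph norm of $\Dirac_g$, exactly as you do. Two small differences favour your version. Your factor $(\rho_g^h)^{1/2}=(\det h/\det g)^{1/4}$ is the one that actually makes $\widehat{\beta_g^h}$ unitary; the paper's $f_g^h=\sqrt{\det g/\det h}$, as written, does not. Your appeal to elliptic regularity for the equivalence of the $H^1$ and graph norms is also cleaner than the paper's hand computation.
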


The graph norm of any Dirac operator is equivalent to a Sobolev norm \cite{Maier97}, thus we henceforth use the graph norm $\CDN_g$ of the Dirac operator $\Dirac_g$ to metrize $\Gamma_{H_1}(\Sigma^g M)$, as mentionned above. In other words, \cite[Theorem 2.1]{Nowaczyk13} proves that the function
\begin{equation*}
	h \in \mathcal{R}(M) \mapsto \sup_{\substack{ \xi \in \dom{\Dirac_g} \\ \CDN_g(\xi) \leq 1 }}\norm{ (\Dirac_g^h - \Dirac_g) \xi}{\Hilbert_g} 
\end{equation*}
is continuous. 

\medskip

It is apparent that this result is a strong start toward applying Lemma (\ref{main-lemma}). In order to make clear that this result does not rely on some of the results in \cite{BG} about the spectra of Dirac operators, we will now spend the time to explain how this theorem is proven. Moreover, these explicit computations will allow us to extend our results to almost commutative spectral triples later on.

\medskip

It will prove helpful to consider the local expression of the Dirac operator, so that its dependency on the metric is made apparent. To this end, let us fix an explicit finite atlas $\mathcal{A}$ of $M$, with the property that, for any chart $(U,\varphi)$ in $\mathcal{A}$, there exists a chart $(V,\psi)$ of $M$ such that $\closure{U}\subseteq V$ and $\varphi = \psi_{|U}$ --- such an atlas exists since $M$ is compact. When choosing a local frame over $U$ for some bundle over $M$, we always will mean the restriction to $U$ of some frame over $V$.  As a matter of notation, we will write a chart in $\mathcal{A}$ as $(U,x^1,\ldots,x^d)$ with $x^1,\ldots,x^d : U \rightarrow \R$, and we denote the associated canonical, coordinate frame of $TM_{|U}$ as $\left(\frac{\partial}{\partial x^1}, \ldots, \frac{\partial}{\partial x^d}\right)$. The dual basis, or coframe, for the cotangent subbundle $T^\ast U$ of $TM$ is denoted by $\dd{x^1},\ldots,\dd{x^d}$.

We make a quick observation: of course, a local chart gives rise to a canonical trivialization of $TM$; by construction of the spinor bundle, we are then certain that for whatever Riemannian metric $g$, the spinor bundle $\Sigma^g M$ also has a canonical trivialization over $U$ (since the spinor bundle is constructed from a lift of the frame bundle of $g$-orthonormal frames of $TM$, itself a subbundle to the frames of $TM$). So we shall hencefort always assume that we have fixed compatible  trivializations of $TM$ and $\Sigma^g M$ compatible with our charts in $\mathcal{A}$ with no further explicit reference.

We recall from \cite{Friedrich} the construction of the spin connection on $\Sigma^g M$. We proceed locally. Let $(U,x^1,\ldots,x^d)$ be a chart of $M$ in our atlas $\mathcal{A}$. We write 
\begin{equation*}
	g_{|U} = \sum_{1\leq j,k \leq d} g_{jk} \dd{x^j}\otimes \dd{x^k}\text,
\end{equation*} 
i.e. we identify the metric $g_{|U}$ with a section of positive definite matrices over each fiber in $TM_{|U}$.

Let $e_1(g),\ldots,e_d(g)$ be an orthonormal frame of $TM_{|U}$ for the metric $g$ (again by convention, this frame extends to some open subset $V$ of $M$ with $\closure{U}\subseteq V$). Such a frame can be constructed by applying the Gram-Schmidt process to the local frame $\frac{\partial}{\partial x^1},\ldots,\frac{\partial}{\partial x^d}$, though any orthonormal local frame will do. We write $e_j(g) = \sum_{k} e_j^k(g) \frac{\partial}{\partial x^k}$, with $e_j^k$ smooth functions over $U$ for all $j,k\in\{1,\ldots,d\}$. \emph{For each chart in $\mathcal{A}$, we henceforth fix such an orthonormal frame all throughout this section.}

Let $\nabla^g$ be the Levi-Civita connection over $M$ for $g$. Locally, we have
\begin{equation*}
	\nabla^g_X(Y) = \sum_{k=1}^d\left(\sum_{j,r=1}^d X^j \frac{\partial Y^k}{\partial x^j} + \Gamma(g)_{jr}^{k} X^j Y^r\right)\frac{\partial}{\partial x^k} \text,
\end{equation*}
for all $X = \sum_{j=1}^d X^j \frac{\partial}{\partial x^j} \in TM_{|U}$ (i.e. $X^j = \dd{x^j}(X)$) and $Y = \sum_{j=1}^d Y^j \frac{\partial}{\partial x^j} \in TM_{|U}$, where the scalar coefficients $\Gamma$'s are the Christoffel symbol given by:
\begin{equation}\label{Christoffel-eq}
	\Gamma(g)_{jq}^p \coloneqq \frac{1}{2}\sum_{l=1}^d (g^{-1})_{p l} \left( \frac{\partial g_{l q}}{\partial x_j} + \frac{\partial g_{jl}}{\partial x_q} - \frac{\partial g_{jp}}{\partial x_l} \right) \text,
\end{equation}
where $g^{-1}$ is meant here for $m \in U \mapsto (g_{jk})_{1\leq j,k\leq d}^{-1}$.

\medskip

For all $j,k,l \in \{1,\ldots,d\}$, we then set:
\begin{equation*}
	\omega_{jkl}(g) \coloneqq g(\nabla^g_{e_k}(e_j),e_l) \text.
\end{equation*}

A direct computation in local coordinates shows:
\begin{equation*}
	\nabla_{e_k}^g e_j = \sum_{m=1}^d\sum_{p=1}^d e_k^m \left( \frac{\partial e_j^p}{\partial x_m} + e_j^p \Gamma(g)_{mp}^p\right) \frac{\partial}{\partial x_q}\text.
\end{equation*}
Therefore, by definition of $\omega_{jkl}$, a direct computation now shows:
\begin{equation}\label{spin-eq}
	\omega_{jkl}(g) = \sum_{q,r,m,p=1}^d  g_{qr}  e_l^r \left( e_k^m \frac{\partial e_j^q}{\partial x_m} + e_k^m e_j^p \Gamma(g)_{mp}^q \right) \text.
\end{equation}

The spin connection on the spinor bundle $\Sigma^g M$ is defined locally by setting, for all $\psi \in \Gamma_\infty(\Sigma^g M)$ supported in $U$, and for all $j \in \{1,\ldots,d\}$,
\begin{equation*}
	\slashed{\nabla}_{e_j}(\psi) \coloneqq \sum_{j=1}^d e_j^p \frac{\partial\psi}{\partial x^p} + \frac{1}{4}\sum_{k,l=1}^d \omega_{jkl}(g) c(e_k)c(e_j) \psi \text,
\end{equation*}
and, of course, $\slashed{\nabla}_{X}(\psi) = \sum_{j=1}^d X_e^j \slashed{\nabla}_{e_j}(\psi)$ whenever $X \in TM$, supported on $U$, is written as $X = \sum_{j=1}^d X_e^j e_j$ (note that here, we express $X$ over the frame $(e_1,\ldots,e_d)$ rather than the coordinate frame $\left(\frac{\partial}{\partial x^1},\ldots,\frac{\partial}{\partial x^d}\right)$).

We refer to \cite{Friedrich} for the claim that $\slashed{\nabla}^g$ thus defined is indeed a connection on $\Gamma_\infty(\Sigma^g M)$.
\medskip

We may now express the Atiah-Singer Dirac operator $D_g$ on the smooth sections of $\Sigma^g M$ supported on $U$ locally as:
\begin{align}\label{Dg-eq}
	D_g 
	&\coloneqq \sum_{j=1}^d c(e_j(g)) \slashed{\nabla}^g_{e_j(g)} \\
	&=\sum_{j=1}^d c(e_j(g)) \left[\sum_{p=1}^d e_j^p(g) \frac{\partial}{\partial x^p} + \frac{1}{4}\sum_{k,l=1}^d \omega_{jkl}(g) c(e_k(g) e_l(g))  \right] \nonumber
\end{align}
which follows from the composition given in Expression \eqref{Dirac-def}. For our purpose, we wish to look at $D_g$ as an unbounded operator acting on a Hilbert space, which of course is  completion of $\Gamma_\infty(\Sigma^g M)$ for an inner product which we now describe.

\medskip

There is an inner product $\cdot_m$ on each fiber $\Sigma_m^g M$ of the spinor bundle $\Sigma^g M$  by construction. Now, let 
\begin{equation*}
	\mathrm{vol}_g = \sqrt{\det(g_{kl})_{1\leq k,l\leq d}} \; \dd{x^1}\wedge\cdots\wedge \dd{x^d} \end{equation*}
be the volume $d$-form induced by $g$ over $M$. We then define an inner product on $\Gamma_\infty(\Sigma^g M)$ by
\begin{equation*}
	\inner{s}{s'}{L^2(g)} \coloneqq \int_M s_m \cdot_m s'_m  \dd{\mathrm{vol}_g(m)}.
\end{equation*}
The completion of $\Gamma_\infty(\Sigma^g M)$ for $\inner{\cdot}{\cdot}{L^2(g)}$ is a Hilbert space denoted by $\Gamma_{L^2}(\Sigma^g M)$ (note that indeed this inner product is definite positive on $\Gamma_\infty(\Sigma^g M)$). The operator $D_g$ is essentially self-adjoint on this space, and we denote its self-adjoint closure by $\Dirac_g$. We also denote its domain by $\Gamma_{H^1}(\Sigma^g M)$, which is a Banach space when endowed with the graph norm of $\Dirac_g$, defined by $\CDN_g(\xi) \coloneqq \norm{\xi}{\Gamma_{L^2}(\Sigma^g M)} + \norm{\Dirac_g\xi}{\Gamma_{L^2}(\Sigma^g M)}$ for all $\xi \in \Gamma_{H_1}(\Sigma^g M)$.

\medskip

For the computations that follow, we observe that we can bound the Hilbert norm on $\Gamma_{L^2}(\Sigma^g M)$ locally, by noting that
\begin{equation*}
	\norm{\psi}{\Gamma_{L^2}(\Sigma^g M)} \leq \sum_{(U,x^1,\ldots,x^d)\in\mathcal{A}} \norm{\psi_{|U}}{\Gamma_{L^2}(\Sigma^g M)_{|U}} 
\end{equation*}
where we used the fact that $\mathcal{A}$ is finite. So as a rule for the rest of this section, it will be sufficient to find local bounds over each chart for this norm to conclude that the sum of these local bounds is a global bound on the Hilbert norm (or Sobolev norm).

\medskip

So now, let $\psi \in \Gamma_\infty((\Sigma^g M)_{|U})$. We observe that
\begin{multline}\label{one-bound-Dg-H1-eq}
	\norm{\Dirac_g\psi}{\Gamma_{L^2}(\Sigma^g M)} + \norm{\psi}{\Gamma_{L^2}(\Sigma^g M)} \leq \\
	\sum_{j=1}^d \left( \norm{e_j^p(g)}{} \norm{\frac{\partial \psi}{\partial x^p}}{\Gamma_{L^2}(\Sigma^g M)} + \frac{1}{4} \sum_{k,l=1}^d \norm{\omega_{jkl}(g)}{\Gamma_{L^2}(\Sigma^g M)} \norm{\psi}{\Gamma_{L^2}(\Sigma^g M)} \right) \text. 
\end{multline}
Now, if $\psi \in \Gamma_{H_1}$, by definition, there exists $(\psi_n)_{n \in \N}$ in $\dom{D_g}$ such that 
\begin{equation*}
	\lim_{n\rightarrow\infty} \norm{\Dirac_g \psi -  D_g \psi_n}{\Gamma_{L^2}(\Sigma^g M)} + \norm{\psi-\psi_n}{\Gamma_{L^2}(\Sigma^g M)} = 0\text.
\end{equation*}
It thus follows that Expression \eqref{one-bound-Dg-H1-eq} holds for all $\psi \in \Gamma_{H_1}(\Sigma^g M_{|U})$.

Now, if we set
\begin{equation*}
	\norm{\psi}{H_1,U} \coloneqq \max_{j=1,\ldots,d} \norm{\frac{\partial\psi}{\partial_x^j}}{\Gamma_{L^2}(\Sigma^g M)} + \norm{\psi}{\Gamma_{L^2}(\Sigma^g M)}
\end{equation*}
then $\norm{\psi}{\Gamma_{L^2}(\Sigma^g M_{|U}} + \norm{\Dirac_g \psi}{\Gamma_{L^2}(\Sigma^g M_{|U}} \leq C(g,U) \norm{\psi}{H_1,U}$, for
\begin{equation}
	C(g,U) \coloneqq \sum_{j=1}^d \norm{e_j^p}{C(\closure{U})} + \frac{1}{4}\sum_{jkl=1}^d \norm{\omega_{jkl}(g)}{C(\closure{U})}\text.
\end{equation}
Therefore, if $\psi \in \Gamma_{H_1}(\Sigma^g M)$, then
\begin{equation*}
	\CDN_g(\psi) \leq \sum_{(U,x^1,\ldots,x^d) \in \mathcal{A}} \CDN_g(\psi_{|U}) \leq K(g) \norm{\psi}{H_1}
\end{equation*}
where $K(g) = \max_{(U,x^1,\ldots,x^d)\in\mathcal{A}} C(g,U)$ and 
\begin{equation}\label{H1-norm-eq}
	\norm{\psi}{H_1} \coloneqq \sum_{(U,x^1,\ldots,x^d)\in\mathcal{A}} \norm{\psi_{|U}}{H_1,U} \text.
\end{equation}

On the other hand, and returning to working locally over $(U,x^1,\ldots,x^d) \in \mathcal{A}$, by anticommutativity of the operators $c_g(e_1),\ldots,c_g(e_d)$ and the antisymmetry of $\omega_{jkl}(g)$ for fixed $j$, itself an immediate consequence of the metric compatibility of the Levi-Civita metric, the definition of the spin connection, and the orthogonality of the frame $e_1(g),\ldots,e_d(g)$, we conclude that:
\begin{equation*}
	e_j(g) = \sum_{p=1}^d e_j^p(g) \frac{\partial\psi}{\partial x^p} = \frac{1}{2}\left( \Dirac_g c_g(e_j(g)) + c_g(e_j(g)) \Dirac_g\right)\psi \text.
\end{equation*}
Of course, for all $p \in \{1,\ldots,d\}$,
\begin{equation*}
	\frac{\partial}{\partial x^p} = \sum_{j=1}^d \sum_{k=1}^d g_{pk} e_j^k(g) e_j(g) \text,
\end{equation*}
since $(e_1(g),\ldots,e_d(g))$ is an orthonormal frame for $g$. Now, the functions $g_{pk}$ and $e_p^k$, for $p,k \in \{1,\ldots,d\}$, are all continuous over the compact $\closure{U}$. So there exists an upper bound $M_U$ such that:
\begin{equation*}
	\max_{j=1,\ldots,d} \norm{\frac{\partial\psi}{\partial x^p}}{} \leq M_U  \CDN_g(\psi)\text.
\end{equation*}
So $\norm{\psi}{H_1} \leq (M+1) \CDN_g(\psi)$ where $M \coloneqq \sum_{(U,x^1,\ldots,x^d)\in\mathcal{A}} M_U$. In conclusion, with $Q(g) = (M+1)^{-1}$, we obtain the equivalence of norm:
\begin{equation}\label{equiv-H1-CDN-eq}
	Q(g) \norm{\cdot}{H_1} \leq \CDN_g \leq C(g) \norm{\cdot}{H_1} \text.
\end{equation}

\medskip

Now, our purpose is to study the dependency of the Dirac operator $\Dirac_g$ in the metric $g$. To this end, let we work on the space $\mathcal{R}(M)$ of all Riemannian metrics over our manifold $M$. We endow $\mathcal{R}(M)$ with the so-called $C^1$-topology, which is metrized using the restriction of a norm on the space of $(2,0)$-tensors to the subspace $\mathcal{R}(M)$. It will be helpful for our exposition to describe this metric here. 

 Fix $g, h \in \mathcal{R}(M)$. Let $(U,(x_1,\ldots,x_d))$ be a local chart of $M$ in $\mathcal{A}$, and let $\varepsilon > 0$. In these coordinates, we write
\begin{equation*}
	g_{|U} = \sum_{j,k=1}^d g_{jk} dx^j \otimes dx^k \text{ and } h_{|U} = h_{jk} dx^j \otimes dx^k \text,
\end{equation*}
with $g_{jk} : U \rightarrow\R$, $h_{jk} : U\rightarrow \R$ are $C^\infty$ functions over $U$. We then define:
\begin{equation*}
	\delta_{U,x_1,\ldots,x_d} (g,h) \coloneqq \max_{j,k \in \{1,\ldots,d\}} \norm{ h_{jk} - g_{jk}}{C_b^1(U)}
\end{equation*}
and
\begin{equation*}
	\rho_{U,x_1,\ldots,x_d}(g,h)  \coloneqq \max_{j,k \in \{1,\ldots,d\}} \norm{ h_{jk} - g_{jk}}{C_b(U)}
\end{equation*}
where
\begin{equation*}
\norm{f}{C_b(U)} \coloneqq \sup_{x\in U} |f(x)| \text,
\end{equation*}
and
\begin{equation*}
	\norm{f}{C_b^1(U)} \coloneqq \norm{f}{C_0(U)} + \sup_{j \in \{1,\ldots,d\}} \norm{\frac{\partial f}{\partial x_j}}{C_b(U)} \text, 
\end{equation*}
for any $C^1$ function $f : U \rightarrow \R$. Note that our assumption on $f$ and $\mathcal{A}$ ensures that these norms are indeed finite.

\medskip

The \emph{$C^1$}-topology on $\mathcal{R}(M)$ is the topology induced by the metric:
\begin{equation*}
	\delta_{\mathcal{A}}: h,g \in \mathcal{R}(M) \mapsto \max_{ (U,x_1,\ldots,x_d) \in \mathcal{A}} \delta_{U,x_1,\ldots,x_d} (g,h) \text.
\end{equation*}
The $C^0$-topology on $\mathcal{R}{M}$ is the topology induced by the metric:
\begin{equation*}
	\rho_{\mathcal{A}}: h,g \in \mathcal{R}(M) \mapsto \max_{ (U,x_1,\ldots,x_d) \in \mathcal{A}} \rho_{U,x_1,\ldots,x_d} (g,h) \text.
\end{equation*}
Of course, $\rho_{\mathcal{A}} \leq \delta_{\mathcal{A}}$.

Now, while the metrics $\delta_{\mathcal{A}}$ and $\rho_{\mathcal{A}}$ depend on our choice of a finite atlas $\mathcal{A}$ of $M$, the induced topology does not --- if $\mathcal{B}$ is another finite atlas for $M$, then the change of basis formula between $(2,0)$ tensors immediately implies that $\delta_{\mathcal{A}}$ and $\delta_{\mathcal{B}}$ are equivalent, and similarly $\rho_{\mathcal{A}}$ and $\rho_{\mathcal{B}}$ are equivalent. That said, our assumption on $\mathcal{A}$ ensures that $\delta_{\mathcal{A}}$ is finite between any two Riemannian metrics.

\medskip

Now, a difficulty arises when working with Dirac operators for different Riemannian metrics: they act on different spinor bundles. We follow \cite{BG} to describe a way to obtain a unitary between the Hilbert spaces of volume-square-integrable sections of the spinor bundles for two different Riemannian metrics, in a canonical manner, so that this unitaries vary smoothly in the metric, in an appropriate sense which we now explain. 

To begin with, fix $V = \R^d$. Let $g$ and $h$ be two inner products on $V$. Of course, there exists a unique positive linear endomorphism $H_g^h$ of $V$ such that $g(H_g^h\cdot,\cdot) = h(\cdot,\cdot)$. Moreover, the function $(g,h)\in\mathscr{I}(V)^2 \mapsto H_g^h$, where $\mathscr{I}(V)$ is the set of inner products on $V$, is smooth.

In the previous section, we use the Gram-Schmidt process to get a unitary $u$ from $(V,h)$ into $(V,g)$ such that $g(u\cdot,u\cdot) = h$; this unitary is not unique, though it is unique up to left multiplication by a unitary of $(V,g)$: if $u^\dagger$ is the adjoint of $u$ for $g$ (vs the adjoint $u^\ast$ of $u$ seen as a map from $(V,h)$ to $(V,g)$) then $u^\dagger u = H_g^h$, any if $o$ is any unitary of $(V,g)$, then $o u$ also satisfies $g(o u \cdot, o u \cdot) = h$. In the current setting, following \cite{BG}, we make a simpler choice which does not depend on the choice of local frames (which would involve local choices and require checking what happens in overlapping regions), and we instead define $b_g^h : V\rightarrow V$ by $b_g^h = \sqrt{H_g^h}$. By the functional calculus, $(g,h) \in \mathscr{I}(V)^2 \mapsto b_g^h$ is also smooth.

Now, we return to the manifold $M$ and fix $g \in \mathscr{R}(M)$. The function:
\begin{equation*}
	b_{g,m}^h : (h,m) \in \mathscr{R}(M)\times M \mapsto b_{g_m}^{h_m}
\end{equation*}
is jointly continuous smooth by composition. In turn, the map $b_g^h$ defined by setting, for all $X \in \Gamma_\infty(TM)$ (where $\Gamma_\infty(TM)$ is the $C^\infty(M)$-module of smooth sections of the tangent bundle $TM$ over $M$):
\begin{equation*}
	b_g^h (X) : m \in M \mapsto b_{g,m}^h(X_m)
\end{equation*}
is valued in $\Gamma_\infty(TM)$. Moreover, by construction, $b_g^h(f X) = f b_g^h(X)$ for all $f \in C^\infty(M)$ and for all $X \in \Gamma_\infty(TM)$, i.e. $b_g^h$ is a module morphism for $\Gamma_\infty(TM)$. We will abuse our notation a bit, and if $X \in TM_{|U}$, we define $b_g^h(X)$ as above, i.e. as a vector field over $U$ only.

As noted in \cite[Proposition 1]{BG}, the map $b_g^h$ is invertible with inverse $b^g_h$ for any two inner products $g,h$, which one can then easily extend to show that $b_g^h$ is invertible with inverse $b_h^g$ for any two $g,h \in \mathcal{R}(M)$.

Let $(U,x_1,\ldots,x_d)$ and let $e_1(g),\ldots, e_d(g)$ be the $g$-orthonormal frame of$TM_{|U}$ fixed in the first part of this section. We \emph{define} 
\begin{equation*}
	e_j(h) \coloneqq b_h^g(e_j^g)
\end{equation*}
for $j\in\{1,\ldots,d\}$. By definition of $b_h^g$, the family $(e_1(h),\ldots,e_d(h))$ is a $h$-orthonormal frame of $TM_{|U}$. For each $j \in \{1,\ldots,d\}$ and $x\in U$, we define 
\begin{equation*}
	a_{jk}(h,m) \coloneqq g_m (e_j(h)_m, e_k(g)_m) = [g(b_h^g(e_j(g)),e_k(g))](m)
\end{equation*}
so that
\begin{equation*}
	e_j^h = \sum_{k=1}^d a_{jk}(h) e_k(g) \text.
\end{equation*}
By continuity of $b_h^g$, we conclude that $a_{jk}$ is continuous on $\mathcal{R}(M)\times\closure{U}$ as well.

Let now $\Omega$ be a closed neighborhood of $g$ in $\mathcal{R}(M)$ for the $C^1$ topology. Note that $\Omega$ is thus compact in the $C^0$ topology. Therefore, $\Omega\times \closure{U}$ is compact. Therefore, the restriction of $a_{jk}$ to $\Omega\times \closure{U}$ is uniformly continuous.

Therefore, in particular, if $(h_n)_{n\in\N}$ converges to $g$, then $(a_{jk}(h_n,\cdot))_{n\in\N}$ converges uniformly to $a_{jk}(g,\cdot)$. Of course,
\begin{equation*}
	a_{jk}(g,\cdot) = \begin{cases}
		1 \text{ if $j=k$,} \\
		0 \text{ otherwise.}
	\end{cases}
\end{equation*}

\medskip

Now, by \cite[Proposition 5]{BG}, there exists a natural lift of the linear map  $b_{g_m}^{h_m}$, for any $m\in M$, to a linear isomorphism $\beta_g^h$ from $(\Sigma^h M)_m$ onto $(\Sigma^g M)_m$, which intertwine the actions of the Clifford algebra $c_{h_m}$ and $c_{g_m}$, i.e. $\beta_{g_m}^{h_m} c_{h_m} = c_{g_m} \beta_{h_m}$. Moreover, the map $m\in M \mapsto \beta_{g_m}^{h_m}$ is smooth, which in turn, allows us to define the $C^\infty(M)$-module map:
\begin{equation*}
	\forall \psi \in \Gamma_\infty(\Sigma^h M) \quad \beta_g^h(\psi) : m \in M \mapsto \beta_{g_m}^{h_m}(\psi_m)  \in \Gamma_\infty(\Sigma^g M) \text.
\end{equation*}
Once again whenever convenient, we will use the same notation for the map $\beta_g^h$ acting on $\Gamma_\infty(\Sigma^g M)_{|U}$ valued in $\Gamma_\infty(\Sigma^h M)_{|U}$. It is then straightforward to extend these maps by continuity to a $C(M)$-module morphism $\beta_g^h$ from $\Gamma_{L^2}(\Sigma^h M)$ to $\Gamma_{L^2}(\Sigma^g M)$. The resulting maps are linear isomorphisms, with inverse $\beta_h^g$.

However, the maps $\beta_g^h$, for $g,h \in \mathcal{R}(M)$, are not unitary because we also must account for the change in the volume form, as noted in \cite{Maier97}. We thus define:
\begin{equation}\label{fg-eq}
	f_g^h \coloneqq \sqrt{\frac{\det(g)}{\det(h)}} \text.
\end{equation}
Note that $f_g^h$ is a smooth function and that $h \in \mathcal{R}(M) \mapsto f_g^h \in C(M)$ is a continuous function by the definition of the $C^0$-topology on $\mathrm{R}(M)$ --- for our purpose, this map is then of course continuous from $C^1$ topology on $\mathrm{R}(M)$ to the topology on $C(M)$ induced by the usual supremum norm, i.e. C*-norm, on $C(M)$.

We thus define, for all $\psi \in \Gamma_{L^2}(\Sigma^h M)$,
\begin{equation*}
	\widehat{\beta_g^h} (\gamma) : m \in M \mapsto f_g^h(m) \beta_{g_m}^{h_m}(\gamma_m) \text,
\end{equation*}
i.e. the composition of the multiplication operator by $f_g^h$ with $\beta_g^h$.

Now, by construction, $\widehat{\beta_g^h}$ is a unitary from $\Gamma_{L^2}(\Sigma^h M)$ onto $\Gamma_{L^2}(\Sigma^g M)$ which intertwine the action of $C(M)$ by multiplication operators on each of these Hilbert spaces (i.e. it is a module morphism as well: $\widehat{\beta_g^h}(f \gamma) = f \widehat{\beta_g^h}(\gamma)$), since of course, the representations by multiplication operator of $C(M)$   on these Hilbert spaces commute with the multiplication by $f_g^h$, and $\beta_g^h$ was already a module map.

Moreover, by construction, for all $X\in TM$ and for all $\psi \in \Gamma_{L^2}(\Sigma^g M)$, we also have the following property, since multiplication by functions in $C(M)$ commutes with the Clifford multiplication:
\begin{equation*}
	\widehat{\beta_g^h}(c_h(X)\psi) = c_g(b_g^h(X)) \widehat{\beta_g^h(\psi)} \text.
\end{equation*}

\medskip

We are now ready to compare Dirac operators associated with different Riemannian metrics. Now, let us again choose some chart $(U,x_1,\ldots,x_d)$ in our atlas $\mathcal{A}$, fix our previously chosen $g$-orthonormal frame $e_1(g),\ldots, e_d(g)$, and set $e_j(h) \coloneqq \beta_h^g(e_j(g))$ for all $j\in\{1,\ldots,d\}$, as before. We note that we can identify $\beta_g^h$ restricted to $\Gamma_\infty(\Sigma^h M_{|U})$ with a smooth section of matrices over $U$. We then compute, in local coordinates, and on $\dom{D_g}$ (vs $\dom{\Dirac_g}$ to which we will return later):
\begin{align*}
	\widehat{\beta_g^h}\Dirac_h \widehat{\beta_h^g}
	&\coloneqq \widehat{\beta_g^h} \left(\sum_{j=1}^d c_h(e_j(h))\left(\sum_{p=1}^d e_j^p(h) \frac{\partial}{\partial x_p} + \frac{1}{4} \sum_{k,l=1}^d \omega_{jkl}(h) c_h(e_k(h) e_l(h)) \right) \right)\widehat{\beta_h^g}\\
	&= \sum_{j=1}^d \widehat{\beta_g^h} c_h(e_j(h)) \widehat{\beta_h^g} \\
	&\quad\times \left( \sum_{p=1}^d e_j^p(h) \widehat{\beta_g^h}\frac{\partial}{\partial x_p} \widehat{\beta_h^g} + \frac{1}{4}\sum_{k,l=1}^d \omega_{jkl}(h) \widehat{\beta_g^h}c_h(e_k(h)) \widehat{\beta_h^g} \widehat{\beta_g^h}  c_h(e_l(h))\widehat{\beta_h^g}\right)\\
	&=\sum_{j=1}^d c_g(e_j(g)) \Bigg(\sum_{p=1}^d e_j^p(h) \frac{\partial}{\partial x_p} + \sum_{p=1}^d e_j^p(h) \widehat{\beta_g^h} \frac{\partial \widehat{\beta_h^g}}{\partial x_p} \\
	&\quad\quad\quad + \frac{1}{4} \sum_{k,l=1}^d \omega_{jkl}(h) c_g(e_k(g)) c_g(e_l(g)) \Bigg)\\
	&=\sum_{j=1}^d c_g(e_j(g)) \Bigg(\sum_{p=1}^d e_j^p(h) \left(\frac{\partial}{\partial x_p} + \widehat{\beta_g^h} \left(\frac{\partial \beta_h^g}{\partial x_p} f_h^g + \frac{\partial f_h^g}{\partial x_p} \beta_h^g \right)\right) \\
	&\quad\quad\quad+ \frac{1}{4} \sum_{k,l=1}^d \omega_{jkl}(h) c_g(e_k(g) e_l(g)) \Bigg)\\
	&= \sum_{j=1}^d c_g(e_j(g)) \Bigg(\sum_{p=1}^d e_j^p(h) \left(\frac{\partial}{\partial x_j} + \beta_g^h \frac{\partial \beta_h^g}{\partial x_p}  + f_g^h \frac{\partial f_h^g}{\partial x_p}\right) \\
	&\quad\quad\quad + \frac{1}{4} \sum_{k,l=1}^d \omega_{jkl}(h) c_g(e_k(g) e_l(g)) \Bigg)
 \end{align*}

Therefore,
\begin{equation}
\begin{split}
	\widehat{\beta_g^h} \Dirac_h \widehat{\beta_h^g} - \Dirac_g 
	&= \sum_{j=1}^d  c(e_j(g)) \Bigg( \sum_{p=1}^d (e_j^p(h) - e_j^p(g))\frac{\partial}{\partial x_p} \\
	&+ \sum_{p=1}^d e_j^p(h) \beta_g^h\frac{\partial \beta_h^g}{\partial x_p} + \sum_{p=1}^d e_j^p(h) f_g^h\frac{\partial f_h^g}{\partial x_p} \\
	&+ \frac{1}{4} \sum_{k,l=1}^d (\omega_{jkl}(h) - \omega_{jkl}(g)) c_g(e_k(g))c_g(e_l(g)) \Bigg)\text.
\end{split}	
\end{equation}
so
\begin{equation}\label{D-D-norm-eq}
	\begin{split}
		\norm{(\widehat{\beta_g^h} \Dirac_h \widehat{\beta_h^g} - \Dirac_g)\xi}{} 
	&\leq \sum_{j=1}^d  \Bigg( \sum_{p=1}^d \norm{e_j^p(h) - e_j^p(g))}{C(M)}\norm{\frac{\partial}{\partial x_p}\xi}{L^2} \\
	&+ \sum_{p=1}^d \norm{e_j^p(h)}{} \norm{\beta_g^h\frac{\partial \beta_h^g}{\partial x_p} + e_j^p(h) f_g^h\frac{\partial f_h^g}{\partial x_p}}{C(M)}\norm{\xi}{L^2} \\
	&+ \frac{1}{4}\sum_{k,l=1}^d \norm{\omega_{jkl}(h) - \omega_{jkl}(g))}{C(M)}\norm{\xi}{L^2} \Bigg)\text.
	\end{split}
\end{equation}
So, setting:
\begin{equation}\label{rg-eq}
\begin{split}
		r_g(h) &\coloneqq  Q(g) \sum_{j=1}^d  \Bigg( \sum_{p=1}^d \norm{e_j^p(h) - e_j^p(g))}{C(M)}\\
	&\quad\quad + \sum_{p=1}^d \norm{e_j^p(h)}{} \norm{\beta_g^h\frac{\partial \beta_h^g}{\partial x_p} + e_j^p(h) f_g^h\frac{\partial f_h^g}{\partial x_p}}{C(M)} \\
	&\quad\quad + \frac{1}{4}\sum_{k,l=1}^d \norm{\omega_{jkl}(h) - \omega_{jkl}(g))}{C(M)} \Bigg) \text,
\end{split}	
\end{equation}
we conclude by Expressions \eqref{D-D-norm-eq} and \eqref{equiv-H1-CDN-eq}, and our observation on moving our inequality from local to global, that:
\begin{equation*}
\opnorm{(\widehat{\beta_g^h} \Dirac_h \widehat{\beta_h^g} - \Dirac_g)\xi}{\dom{D_g}}{\Gamma_{L^2}(\Sigma^g M)} \leq r_g(h)\CDN_g(\xi) \text.
\end{equation*}
By continuity, since indeed the Dirac operators are continuous from $\norm{\cdot}{H_1}$ to the Hilbert norm on $\Gamma_{L^2}(\Sigma^g M)$, and $\CDN_g$ is equivalent to $\norm{\cdot}{H_1}$:
\begin{equation*}
\opnorm{(\widehat{\beta_g^h} \Dirac_h \widehat{\beta_g^h} - \Dirac_g)}{\Gamma_{H1}(\Sigma^g M)_{|U}}{\Gamma_{L^2}(\Sigma^g M)_{|U}} \leq r_g(h) \CDN_g\text.
\end{equation*}

Expressions \eqref{Christoffel-eq} and \eqref{spin-eq} together show that $h\in\mathcal{R}(M) \mapsto \norm{\omega_{jkl}(h) - \omega_{jkl}(g))}{C(M)}$ is indeed continuous when $\mathcal{R}(M)$ is endowed with the $C^1$ topology.  Moreover, we also have smoothness of $h\in\mathcal{R}(M) \mapsto f^g_h$ and $h\in\mathcal{R}(M) \mapsto f^g_h$ by construction, as well as smoothness of $h\in\mathcal{R}(M) \mapsto \beta^g_h$ and $h\in\mathcal{R}(M) \mapsto \beta^h_g$ by \cite{BG}. Therefore, $h \in \mathcal{R}(M) \mapsto r_g(h)$ is a continuous function. Moreover, it is immediate that
\begin{equation*}
	\lim_{\substack{h\rightarrow g \\ h \in \mathcal{R}(M)}} r_g(h) = 0\text.
\end{equation*}

We therefore conclude that:
\begin{equation}
	h \in \mathcal{R}(M) \mapsto \Dirac_g^h \in \mathcal{B}\left(\Gamma_{H_1}(\Sigma^g M), \Gamma_{L^2}(\Sigma^g M) \right)
\end{equation}
is continuous as stated.

\medskip

We can now turn to the first theorem of this section. 
\begin{theorem}\label{main-thm-1}
	Let $M$ be a connected compact closed spin manifold. For all $g \in \mathcal{R}(M)$, the following limit holds: 
	\begin{equation*}
		\lim_{\substack{ h \rightarrow g \\ h \in \mathcal{R}(M)}} \spectralpropinquity{}((C(M),\Gamma_{L^2}(\Sigma^h M),\Dirac_h), (C(M),\Gamma_{L^2}(\Sigma^g M),\Dirac_g) ) = 0 \text.
	\end{equation*}
	
	In particular, the spectrum of $\Dirac_h$ converges to the spectrum of $\Dirac_g$ as $h$ approaches $g$ in $\mathcal{R}(M)$; formally if $(h_n)_{n\in\N} \in \mathcal{R}(M)^\N$ converges to $g$ in the $C^1$ topology on $\mathcal{R}(M)$, then:
	\begin{equation*}
		\spectrum{\Dirac_g} = \left\{ \lim_{n\rightarrow\infty}\lambda_n : \forall n \in \N \quad \lambda_n \in \spectrum{\Dirac_{h_n}} \text{ and }(\lambda_n)_{n\in\N} \in c(\N) \right\} \text,
	\end{equation*}
	and for all nonempty compact interval $I\subseteq\R$ with $\partial I \cap \spectrum{\Dirac_g} = \emptyset$, and for all $\varepsilon > 0$, there exists $N\in\N$ such that, if $n \geq N$, then
	\begin{equation*}
		\sum_{\lambda\in\spectrum{\Dirac_{h_n}} \cap I} \multiplicity{\lambda}{\Dirac_{h_n}} = \sum_{\lambda\in\spectrum{\Dirac_{g}} \cap I} \multiplicity{\lambda}{\Dirac_{g}} < \infty
	\end{equation*}
	and
	\begin{equation*}
		\Haus{\C}(\spectrum{\Dirac_{h_n}}\cap I, \spectrum{\Dirac_g}\cap I) < \varepsilon \text.
	\end{equation*}
\end{theorem}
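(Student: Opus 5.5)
The plan is to reduce everything to Lemma (\ref{main-lemma}) applied with $\A = C(M)$, $\Hilbert = \Gamma_{L^2}(\Sigma^g M)$, $E = \Gamma_{H_1}(\Sigma^g M)$, and $\Omega = \mathcal{R}(M)$ with the $C^1$ topology, at the base point $\omega = g$. The family of spectral triples is $(C(M),\Gamma_{L^2}(\Sigma^g M),\Dirac_g^h)$ with $\Dirac_g^h = \widehat{\beta_g^h}\Dirac_h\widehat{\beta_h^g}$. Since $\widehat{\beta_g^h}$ is a unitary intertwining the multiplication representations of $C(M)$, the triple $(C(M),\Gamma_{L^2}(\Sigma^g M),\Dirac_g^h)$ is unitarily equivalent to $(C(M),\Gamma_{L^2}(\Sigma^h M),\Dirac_h)$. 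The spectral propinquity between them is therefore $0$, and by the triangle inequality it suffices to show $\spectralpropinquity{}((C(M),\Gamma_{L^2}(\Sigma^g M),\Dirac_g^h),(C(M),\Gamma_{L^2}(\Sigma^g M),\Dirac_g))\to 0$. For the hypotheses of the lemma:
\begin{itemize}
\item Each of these triples is metric, since Connes' distance for $\Dirac_h$ is the path metric of $h$ on the connected manifold $M$.
\item The domains agree: $\dom{\Dirac_g^h} = E$ by Theorem (\ref{BG-thm}), because $\widehat{\beta_g^h}$ is multiplication by a smooth bundle map and therefore preserves $H_1$ Sobolev sections.
\end{itemize}

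Condition (2) of Lemma (\ref{main-lemma}) is exactly the estimate just established: $\opnorm{\Dirac_g^h - \Dirac_g}{}{} \leq r_g(h)$ as operators from $(\Gamma_{H_1}(\Sigma^g M),\CDN_g)$ to $\Gamma_{L^2}(\Sigma^g M)$, with $r_g(h)\to 0$ as $h\to g$ in the $C^1$ topology.

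Condition (1) needs a separate argument. For $a\in C^\infty(M)$, or more generally Lipschitz $a$, compute $[\Dirac_g^h,a] = \widehat{\beta_g^h}[\Dirac_h,a]\widehat{\beta_h^g} = \widehat{\beta_g^h}c_h(\grad{h}{a})\widehat{\beta_h^g}$. Because $\widehat{\beta_g^h}$ intertwines Clifford multiplication through $b_g^h$, this equals $c_g(b_g^h \grad{h}{a})$. Its operator norm is the sup over $M$ of the $g$-length of $b_g^h\grad{h}{a}$, which is the $h$-length of $\grad{h}{a}$, i.e. $\norm{|da|_{h}}{C(M)}$. So $\Lip_h(a) = \sup_m |da_m|_{h_m^\ast}$, and likewise for $g$. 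Convergence $h\to g$ in the $C^0$ topology gives $(1-\varepsilon)|\cdot|_{g^\ast}\leq|\cdot|_{h^\ast}\leq(1+\varepsilon)|\cdot|_{g^\ast}$ uniformly on the compact $M$, hence $|\Lip_h(a)-\Lip_g(a)|\leq\varepsilon\Lip_g(a)$. The domains coincide because both are the Lipschitz functions for bi-Lipschitz equivalent metrics. Alternatively, one could bound $[\Dirac_g^h-\Dirac_g,a]$ directly from the local formula, using only the first-order coefficients $e_j^p(h)-e_j^p(g)$; the zero-order terms commute with $a$. With both conditions verified, Lemma (\ref{main-lemma}) yields the stated limit.

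The spectral statements then follow from Theorem (\ref{cont-thm}) together with \cite[Theorem 5.2]{Latremoliere22} for the description of $\spectrum{\Dirac_g}$ as limits of eigenvalues. Theorem (\ref{cont-thm}) is stated for symmetric intervals $[-\Lambda,\Lambda]$, so a general compact $I=[\alpha,\beta]$ with endpoints off the spectrum needs a short reduction. Choose $\Lambda\notin\spectrum{\Dirac_g}$ large, and rerun the gap and counting argument of that proof with the continuous bump $p$ adapted to $I$: the proof only used that the endpoints lie outside the limit spectrum. Alternatively, take the difference of two counts over $[-\Lambda,\Lambda]$-type intervals. I expect the main obstacle to be the identification $[\Dirac_g^h,a]=c_g(b_g^h\grad{h}{a})$ and the resulting uniform comparison of the $\Lip$ seminorms, including the equality of domains. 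Condition (2) is already established, and the rest is formal.
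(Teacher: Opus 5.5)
Your proposal is correct and follows the paper's proof. It uses the same reduction through the unitaries $\widehat{\beta_g^h}$ and the triangle inequality, obtains Condition (2) of Lemma (\ref{main-lemma}) from $r_g(h)\to 0$ and Condition (1) from $[\Dirac_g^h,a]$ being Clifford multiplication by the transported gradient $b_g^h\grad{h}{a}$, and then applies Theorem (\ref{cont-thm}). Your exact evaluation $\Lip_h(a)=\sup_m|da_m|_{h_m^\ast}$, combined with the uniform $C^0$ comparison of $h$ and $g$, is a slightly cleaner, coordinate-free version of the paper's local estimate via $e_j^p(h)-e_j^p(g)$ and makes the equality of domains transparent. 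You are also right that a general interval $I$ requires rerunning the proof of Theorem (\ref{cont-thm}) with a bump function adapted to $I$, a point the paper leaves implicit; your fallback of subtracting counts over symmetric intervals $[-\Lambda,\Lambda]$ would not isolate $I$ when $\spectrum{\Dirac_g}$ is not symmetric.
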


\begin{proof}
	We will denote $\Lip_g \coloneqq \opnorm{[\Dirac_g,\cdot]}{}{\Gamma_{L^2}(\Sigma^g M)}$ and $\CDN_g = \norm{\cdot}{\Gamma_{L^2}(\Sigma^g M)} + \norm{\Dirac_g\cdot}{\Gamma_{L^2}(\Sigma^g M)}$ on their natural domains.
	
	\medskip
	
	Since the spectral propinquity is null between unitarily equivalent spectral triples, we immediately have for all $h \in \mathcal{R}(M)$:
	\begin{equation*}
		\spectralpropinquity{}((C(M),\Gamma_{L^2}(\Sigma^h M), \Dirac_h), (C(M),\Gamma_{L^2}(\Sigma^g M), \Dirac_g^h) ) = 0\text.
	\end{equation*}
	By the triangle inequality, it is therefore sufficient to prove that if $(h_n)_{n\in\N}$ is a sequence in $\mathcal{R}(M)$ converging to $g \in \mathcal{R}(M)$ in the $C^1$ topology, then:
	\begin{equation*}
		\lim_{n\rightarrow\infty} \spectralpropinquity{}((C(M),\Gamma_{L^2}(\Sigma^g M), \Dirac_g^{h_n}), (C(M),\Gamma_{L^2}(\Sigma^g M), \Dirac_g) ) = 0\text.
	\end{equation*}
	
	By Expression \eqref{equiv-H1-CDN-eq}, we already know that all the operators $\Dirac_g^{h_n}$ have the same domain as $\Dirac_g$ --- namely, $\Gamma_{H_1}(\Sigma^g M)$, and moreover:
	\begin{align*}
		\left| \CDN_{h_n}(\psi) - \CDN_g(\psi) \right| 
		&\leq \norm{(\Dirac_g^{h_n} - \Dirac_g)\psi}{\Gamma_{L^2}(\Sigma^g M)} \\
		\leq r_g(h_n) \CDN_g(\psi)
	\end{align*}
	with $\lim_{n\rightarrow\infty} r_g(h_n) = 0$.

	Now, let $f \in C^\infty(M)$. Using the notation
	\begin{equation*}
		h(\grad{h}{f}, \cdot) = df\text,
	\end{equation*}
as have $[\Dirac_g,f] = c_g(\grad{h}{f})$ and $[\Dirac_h,f] = c_h(\grad{h}{f})$ by \cite[p. 69]{Friedrich}. Hence 
\begin{align*}
	[\tilde{\beta^h_g}\Dirac_h\tilde{\beta^g_h},f] 
	&= \widehat{\beta^h_g} [\Dirac_h,f] \widehat{\beta^g_h} \\
	&= \widehat{\beta^h_g} c_h(\grad{h}{f}) \widehat{\beta^g_h} \\
	&= c_g(\grad{h}{f}) \text.
\end{align*}
We now work locally once more, fixing as before $(U,x^1,\ldots,x^d) \in \mathcal{A}$, our preferred $g$-orthonormal frame $(e_1(g),\ldots,e_d(g))$, and its image $(e_1(h),\ldots,e_d(h))$ by $\widehat{\beta_h^g}$. We then have:
\begin{equation*}
	c_g(\grad{h}{f}) = \sum_{j=1}^d (e_j(h) \cdot f) c_g(e_j(g)) \text{ and }c_g(\grad{g}{f}) = \sum_{j=1}^d (e_j(g)\cdot f) c(e_j(g))
\end{equation*}
where $X\cdot f$ here is meant as $X(f)$ for any $X \in TM$. Therefore,
\begin{align*}
	\opnorm{[\Dirac_g,f] - [\Dirac_g^h,f]}{}{\Gamma_{L^2}(\Sigma^g M_{|U})}
	&= \opnorm{c_g(\grad{h}{f}-\grad{g}{f})}{}{\Gamma_{L^2}(\Sigma^g M_{|U})} \\ 		
	&\leq \sum_{j=1}^d \norm{e_j^p(h) - e^j_p(g)}{C(\closure{U})} \norm{\frac{\partial f}{\partial x^p}}{C(\closure{U})} \\
	&\leq K_Y y_{g,U}(h) \Lip_g(f) \text,
\end{align*}
where
\begin{equation*}
	y_{g,U}(h) \coloneqq \sum_{j=1}^d \norm{e_j^p(h) - e^j_p(g)}{C(\closure{U})} \text,
\end{equation*}
and $K_U > 0$ is chosen so that for all $(U,x^1,\ldots,x^d) \in \mathcal{A}$ and for all $f \in C^\infty(M)$, 
\begin{equation*}
	\max_{j=1,\ldots,d} \norm{\frac{\partial f}{\partial x^j}}{C(\closure{U})} \leq K \norm{\grad{g}{f}}{C(\closure{U})}\text.
	\end{equation*}	
Such a $K_U$ exists simply by definition of the gradiant.
	
	Now, as before, if we set 
	\begin{equation}\label{YK-eq}
		K\coloneqq \sum_{(U,x^1,\ldots,x^d) \in \mathcal{A}} K_U\text{ and }Y_g \coloneqq \sum_{(U,x^1,\ldots,x^d) \in \mathcal{A}} y_{g,U}\text,
	\end{equation}
	 then for all $f \in C^\infty(M)$, we conclude:
	\begin{equation*}
			\opnorm{[\Dirac_g - \Dirac_g^h,f]}{}{\Gamma_{L^2}(\Sigma^g M)} \leq K Y_g(h) \Lip_g(f)\text.
	\end{equation*}
	Note that we use the local nature of the operators $\Dirac_g$, $\Dirac_g^f$ and $f$ (as a multiplication operator).
	
	Since $\Dirac_g - \Dirac_g^h$ is self-adjoint, we conclude that:
	\begin{equation*}
		\opnorm{[\Dirac_g - \Dirac_g^h,f]}{}{\Gamma_{L^2}(\Sigma^g M)} \leq K Y_g(h) \Lip_g(f)
	\end{equation*}
	for all $f \in C(M)$ with $\Lip_g(f) < \infty$. Therefore, if $f \in \dom{\Lip_g} = \dom{\Lip_{h_n}}$, we conclude that:
	\begin{align*}
		\left|\Lip_g(f) - \Lip_{h_n}(f)\right| \leq K Y_g(h_n) \Lip_g(f)
	\end{align*}
	with $\lim_{n\rightarrow\infty} Y_g(h_n) = 0$.

	Therefore, we can apply Lemma (\ref{main-lemma}) and then Theorem (\ref{cont-thm}), which gives us our conclusion.
\end{proof}

\bigskip

We conclude by applying our work to so-called almost commutative models, meaning that we will take the product of an actual Dirac operator as above with a finite dimensional spectral triple. Our reason for doing so is two-fold: it presents intrinsic interest since these models are central to the work of Connes on the standard model, and it also proves that the methods developped in this section extends smoothly from classical to noncommutative geometry.

We henceforth fix a metric spectral triple $(\B,\Hilbert[F],\Dirac[F])$ where $\Hilbert[F]$ is finite dimensional --- hence so is $\B$ since this spectral triple is metric.
The construction depends on the parity of the dimension of $M$, though ultimately our proof works similarly in all cases.

Assume first that $M$ is even dimensional. There exists a self-adjoint unitary operator $\theta$ on $\Gamma_{L^2}(\Sigma^g)$, commuting with the representation of $C(M)$ by multiplication and anticommuting with $\Dirac_g$, i.e. $\bigslant{\Z}{2}$ grading. 
   We define, on $\dom{\Dirac_g}\otimes\Hilbert[F]$:
\begin{equation*}
	\Dirac_{g,F} \coloneqq \Dirac_g \otimes 1 + \theta\otimes \Dirac[F]\text.
\end{equation*}

Now, if $M$ is odd dimensional, but $\Hilbert[F]$ is even dimensional, and there exists a grading operator $\theta$ on $\Hilbert[F]$ for $(\B,\Hilbert[F],\Dirac[F])$ (i.e. a self-adjoint unitary commuting with the action of $\B$ and anticommuting with $\Dirac[F]$), then we proceed similarly and define, on $\dom{\Dirac_g}\otimes\Hilbert[F]$:
\begin{equation*}
	\Dirac_{g,F} \coloneqq \Dirac_g\otimes\theta + 1 \otimes \Dirac[F]\text.
\end{equation*}
In both of these cases , the product spectral triple is defined by:
\begin{equation*}
	(C(M),\Gamma_{L^2}(\Sigma^g M), \Dirac_g)\times (\B,\Hilbert[F],\Dirac{F}) \coloneqq (C(M)\otimes \B, \Gamma_{L^2}(\Sigma^g M)\otimes \Hilbert[F], \Dirac_{g,F}) \text,
\end{equation*}
and we set $\Hilbert_{g} \coloneqq \Gamma_{L^2}(\Sigma^g M)\otimes \Hilbert[F]$. Note that the Hilbert space $\Hilbert_g$ depends on $g$ (as the spinor bundle does) but not on $\Dirac[F]$.

If both $M$ is odd dimensional and we have no grading for $(\B,\Hilbert[F],\Dirac[F])$, then our product is defined on a slightly different Hilbert space, namely:
\begin{equation*}
	(C(M),\Gamma_{L^2}(\Sigma^g M), \Dirac_g)\times (\B,\Hilbert[F],\Dirac{F}) \coloneqq (C(M)\otimes \B, \Gamma_{L^2}(\Sigma^g M)\otimes \Hilbert[F]\otimes\C^2, \Dirac_{g,F}) \text,
\end{equation*}
where:
\begin{equation*}
\Dirac_{g,F} \coloneqq \Dirac_g\otimes 1\otimes \theta_1 + 1 \otimes \Dirac[F] \otimes \theta_2 \text,
\end{equation*}
with $\theta_1,\theta_2$ matrices acting on $\C^2$ and with the usual Clifford relation:
\begin{equation*}
	\forall j,k \in \{1,2\} \quad \theta_j \theta_k + \theta_k \theta_j = \begin{cases} 2 \text{ if $j = k$,} \\ 0 \text{ otherwise.} \end{cases}
\end{equation*}
We then write $\Hilbert_{g} \coloneqq \Gamma_{L^2}(\Sigma^g M)\otimes \Hilbert[F]\otimes\C^2$.

The proof for each of these three products are pretty much the same, so we will prove our main result for the first case, leaving to the reader to check the same methods applies to the other two.

To begin with, these constructions do give rise to metric spectral triples, as proven (for $d$ even, but the same method applies in all other cases) in \cite[Lemma 4.5]{Latremoliere24a}. Now, we establish our result about almost commutative models.

\begin{theorem}\label{main-thm-2}
	Let $M$ be a connected compact closed spin manifold. Let $\Hilbert[F]$ be a finite dimensional Hilbert space, $\B$ a C*-algebra acting faithfully on $\Hilbert[F]$, and let $\mathcal{I}(\B,\Hilbert[F])$ be the set of self-adjoint operators $\Dirac[F]$ on $\Hilbert[F]$ such that $(\B,\Hilbert[F],\Dirac[F])$ is a metric spectral triple. We endow $\mathscr{I}(\B,\Hilbert[F])$ with the topology induced by the operator norm.
	
	 For all $g \in \mathcal{R}(M)$ and $(\B,\Hilbert[F],\Dirac[F]) \in \mathscr{I}(\B,\Hilbert[F])$, the following limit holds: 
	\begin{multline*}
		\lim_{\substack{ (h,\Dirac[G]) \rightarrow (g,\Dirac[F]) \\ (h,\Dirac[G]) \in \mathcal{R}(M)\times \mathcal{I}(\B,\Hilbert[F])}} \spectralpropinquity{}((C(M),\Gamma_{L^2}(\Sigma^h M),\Dirac_h)\times (\B,\Hilbert[F],\Dirac[F]), \\ (C(M),\Gamma_{L^2}(\Sigma^g M),\Dirac_g)\times (\B,\Hilbert[F],\Dirac[G]) ) = 0 \text.
	\end{multline*}
	
	In particular, the spectrum of $\Dirac_h\times\Dirac[G]_n$ converges to the spectrum of $\Dirac_g\times\Dirac[F]$ as $h$ approaches $g$ in $\mathcal{R}(M)$ and $\Dirac[G]$ approaches $\Dirac[F]$ in norm within $\mathcal{R}(\B,\Hilbert[F])$; formally if $(h_n)_{n\in\N} \in \mathcal{R}(M)^\N$ converges to $g$ in the $C^1$ topology on $\mathcal{R}(M)$, and if $(\Dirac[G]_n)_{n\in\N} \in \mathcal{F}(\B,\Hilbert[F])^\N$ converges to $\Dirac[F]$ in norm, then:
	\begin{equation*}
		\spectrum{\Dirac_g\times \Dirac[F]} = \left\{ \lim_{n\rightarrow\infty}\lambda_n : \forall n \in \N \quad \lambda_n \in \spectrum{\Dirac_{h_n}\times \Dirac[G]_n} \text{ and }(\lambda_n)_{n\in\N} \in c(\N) \right\} \text,
	\end{equation*}
	and for all $\Lambda \in (0,\infty)\setminus\spectrum{\Dirac_{g,F}}$, and for all $\varepsilon > 0$, there exists $N\in\N$ such that, if $n \geq N$, then
	\begin{multline*}
		\sum_{\lambda\in\spectrum{\Dirac_{h_n}\times\Dirac[G]_n} \cap [-\Lambda,\Lambda]} \multiplicity{\lambda}{\Dirac_{h_n}\times\Dirac[G]_n} \\
		= \sum_{\lambda\in\spectrum{\Dirac_{g}\times \Dirac[F]} \cap [-\Lambda,\Lambda]} \multiplicity{\lambda}{\Dirac_{g}\times\Dirac[F]} < \infty
	\end{multline*}
	and
	\begin{equation*}
		\Haus{\C}(\spectrum{\Dirac_{h_n}\times\Dirac[F]_n}\cap [-\Lambda,\Lambda], \spectrum{\Dirac_g\times \Dirac[F]}\cap [-\Lambda,\Lambda]) < \varepsilon \text.
	\end{equation*}
\end{theorem}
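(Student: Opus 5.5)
We reduce to Lemma (\ref{main-lemma}) on a fixed Hilbert space and then invoke Theorem (\ref{cont-thm}). We treat the even-dimensional case $\Dirac_{g,F} = \Dirac_g\otimes 1 + \theta\otimes\Dirac[F]$; the other two products are handled identically.

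\textbf{Step 1: move everything to a fixed Hilbert space.} Conjugate by the unitary $\widehat{\beta_g^h}\otimes 1$ from $\Hilbert_h$ onto $\Hilbert_g$. It intertwines the action of $C(M)\otimes\B$, since $\widehat{\beta_g^h}$ is a $C(M)$-module map. It sends $\Dirac_{h,\Dirac[G]}$ to
\begin{equation*}
T_{h,\Dirac[G]} \coloneqq \Dirac_g^h\otimes 1 + \theta^h\otimes\Dirac[G], \quad \theta^h \coloneqq \widehat{\beta_g^h}\theta_h\widehat{\beta_h^g} .
\end{equation*}
Here $\theta_h$ is the grading built from the $h$-orthonormal frame as the normalized product of the $c_h(e_j(h))$. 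Because $\widehat{\beta_g^h}$ intertwines Clifford actions and $e_j(h) = b_h^g(e_j(g))$, we get $\theta^h = \theta$ exactly. So $T_{h,\Dirac[G]} = \Dirac_g^h\otimes 1 + \theta\otimes\Dirac[G]$, with domain $\Gamma_{H_1}(\Sigma^gM)\otimes\Hilbert[F]$, independent of $(h,\Dirac[G])$. The spectral propinquity vanishes between unitarily equivalent triples, so by the triangle inequality it suffices to apply Lemma (\ref{main-lemma}) with $\Omega = \mathcal{R}(M)\times\mathcal{I}(\B,\Hilbert[F])$ and $\omega = (g,\Dirac[F])$.

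\textbf{Step 2: Condition (2).} We have
\begin{equation*}
T_{h,\Dirac[G]} - T_{g,\Dirac[F]} = (\Dirac_g^h - \Dirac_g)\otimes 1 + \theta\otimes(\Dirac[G]-\Dirac[F]).
\end{equation*}
The second term has norm $\opnorm{\Dirac[G]-\Dirac[F]}{}{}$. For the first term we need the graph norm of $\Dirac_{g,F}$ to dominate that of $\Dirac_g\otimes 1$. Since $\theta$ anticommutes with $\Dirac_g$, the cross terms in $\Dirac_{g,F}^2 = \Dirac_g^2\otimes 1 + 1\otimes\Dirac[F]^2$ cancel. Hence $\norm{(\Dirac_g\otimes1)\xi}{} \leq \norm{\Dirac_{g,F}\xi}{}$ on the domain. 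Tensoring the bound $r_g(h)\CDN_g$ from the previous computation with the identity on the finite dimensional $\Hilbert[F]$ (apply it in an orthonormal basis of $\Hilbert[F]$) gives
\begin{equation*}
\norm{(T_{h,\Dirac[G]}-T_{g,\Dirac[F]})\xi}{} \leq \left(C\, r_g(h) + \opnorm{\Dirac[G]-\Dirac[F]}{}{}\right)\CDN_{g,F}(\xi),
\end{equation*}
and the right-hand coefficient tends to $0$.

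\textbf{Step 3: Condition (1).} This is the main obstacle. For $a\in\dom{\Lip_{g,F}}$,
\begin{equation*}
[T_{h,\Dirac[G]} - T_{g,\Dirac[F]},a] = [(\Dirac_g^h-\Dirac_g)\otimes1, a] + [\theta\otimes(\Dirac[G]-\Dirac[F]),a].
\end{equation*}
We must bound each term by $o(1)\Lip_{g,F}(a)$, and two difficulties arise.
\begin{itemize}
\item[(a)] Since $a$ is a general element of $C(M)\otimes\B$ and not an elementary tensor, write $a = \sum_k f_k\otimes b_k$ over a fixed linear basis $(b_k)$ of the finite dimensional $\B$. From $\Lip_{g,F}(a)$ we must control $\max_k\Lip_g(f_k)$ and $\norm{a}{}$ modulo scalars.
\item[(b)] The second commutator involves $\norm{a}{}$ rather than $\Lip(a)$.
\end{itemize}
For (a), use the grading as in \cite[Lemma 4.5]{Latremoliere24a}. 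The anticommutation of $\theta$ gives
\begin{equation*}
[\Dirac_g\otimes1,a] = \tfrac12\left([\Dirac_{g,F},a] + (\theta\otimes1)[\Dirac_{g,F},a](\theta\otimes1)\right)
\end{equation*}
(for $a$ commuting with $\theta\otimes1$). So $\opnorm{[\Dirac_g\otimes1,a]}{}{}\leq\Lip_{g,F}(a)$. Applying slice maps $\mathrm{id}\otimes\phi_k$ that pick out the coefficients $b_k$, together with the earlier estimate $\opnorm{[\Dirac_g-\Dirac_g^h,f]}{}{}\leq KY_g(h)\Lip_g(f)$, bounds the first term by $K' Y_g(h)\Lip_{g,F}(a)$. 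For (b), note that Condition (1) is invariant under $a\mapsto a+t\unit$. Replace $a$ by $a-\mu(a)\unit$, whose norm is at most $\qdiam{}{\Kantorovich{\Lip_{g,F}}}\Lip_{g,F}(a)$, finite since the product triple is metric. The second term is then at most $2\opnorm{\Dirac[G]-\Dirac[F]}{}{}\,\qdiam{}{}\,\Lip_{g,F}(a)$.

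With both conditions established, Lemma (\ref{main-lemma}) gives convergence in the spectral propinquity, and Theorem (\ref{cont-thm}) yields the spectral statements.
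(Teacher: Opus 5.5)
Your proposal is correct and follows essentially the paper's route. You conjugate to the fixed space $\Gamma_{L^2}(\Sigma^g M)\otimes\Hilbert[F]$, check Condition (2) of Lemma (\ref{main-lemma}) with $r_g(h)+\opnorm{\Dirac[G]-\Dirac[F]}{}{}$, and check Condition (1) by using the grading to split the commutator, decomposing over a basis of $\B$, and handling the finite part through $a-\mu(a)$ and the quantum diameter. Your slice maps replace the paper's open-mapping norm equivalence, and your check that the conjugated grading is $\theta$ itself makes explicit a point the paper leaves implicit.

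There is one harmless sign slip: the symmetrization should read $[\Dirac_g\otimes 1,a]=\frac{1}{2}\left([\Dirac_{g,F},a]-(\theta\otimes 1)[\Dirac_{g,F},a](\theta\otimes 1)\right)$, since the sum you wrote equals $[\theta\otimes\Dirac[F],a]$. The bound $\opnorm{[\Dirac_g\otimes 1,a]}{}{}\leq\Lip_{g,F}(a)$ is unaffected.
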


\begin{proof}
As a matter of notation, we write:
\begin{equation*}
	\Lip_{h,G} \coloneqq \opnorm{[\Dirac_{h,G},\cdot}{}{\Hilbert_h} \text{ and }\CDN_{h,G} = \norm{\cdot}{\Hilbert_h} + \norm{\Dirac_{h,G}\cdot}{\Hilbert_h}\text.
\end{equation*}

Let's start with $M$ being even. We observe that 
\begin{equation*}
	\Dirac_{h,G} (\theta\otimes 1_{\Hilbert[F]}) - (\theta\otimes) 1_{\Hilbert[F]} \Dirac_{h,G} = \Dirac_h\otimes 2\cdot 1_{\Hilbert[F]}
\end{equation*}
and
\begin{equation*}
	\Dirac_{h,G} (\theta\otimes 1_{\Hilbert[F]}) + (\theta\otimes 1_{\Hilbert[F]}) \Dirac_{h,G} = 2\cdot 1_{\Gamma_{L^2}(\Sigma^h M)} \otimes \Dirac[G]\text. \end{equation*}
so for all $\psi \in \Gamma_{H_1}(\Sigma^h M)\otimes \Hilbert[F]$, we conclude:
\begin{equation}\label{even-triple-eq}
 	\max\left\{ \norm{(\Dirac_{h}\otimes 1_{\Hilbert[F]})\psi}{}, \norm{(1_{\Gamma_{L^2}(\Sigma^h M)}\otimes G)\psi}{} \right\} \leq \CDN_{h,G}(\psi) \text.
\end{equation}

Let now $\psi \in \Hilbert_{g,F}$. We immediately obtain:
\begin{equation*}
	\left| \norm{\Dirac_{g,F}\psi}{} - \norm{\Dirac_{h,G}\psi}{} \right| \leq \norm{((\Dirac_{g,F} -\Dirac_{h,G})\otimes 1_{\Hilbert_{F}})\psi}{} + \norm{\theta\otimes (F-H)\psi}{} \text.
\end{equation*}
Trivially 
\begin{equation*}
	\norm{\theta\otimes (F-G)\psi}{\Hilbert_{g,F}} \leq \opnorm{F-G}{}{\B}\norm{\psi}{\Hilbert_{g,F}}\text.
\end{equation*}
On the other hand, we can define the seminorm $\norm{\cdot}{H_1}$ on $\Gamma_{H_1}(\Sigma^g M)\otimes \Hilbert[F]$ as follows. The vector space $\Hilbert[F]$ is finite dimensional; let us fix $\xi_1,\ldots,\xi_N$ an orthonormal basis for it. We can of course see $\Hilbert_{g,F} = \Gamma_{L^2}(\Sigma^g M)\otimes\Hilbert[F]$ as a bundle over $M$ with fiber as $m \in M$ given by $\Sigma^g_m M \otimes\Hilbert_{F}$, but we shall be even more direct in our presentation here. If $\psi \in \Hilbert_{g,F}$, then $\psi = \sum_{j=1}^N \psi_j \otimes \xi_j$ for some unique choice of $\psi_1,\ldots,\psi_N \in \Gamma_{L^2}(\Sigma^g M)$, and we then just set for any chart $(U,x^1,\ldots,x^d) \in \mathcal{A}$:
\begin{equation*}
	\frac{\partial \psi}{\partial x^p} \coloneqq \sum_{j=1}^N \frac{\partial \psi_j}{\partial x^p} \xi_j \text.
\end{equation*}
We then define $\norm{\psi}{H_1}$ formally as we did in Expression \eqref{H1-norm-eq}. We also note with our definition, 
\begin{equation*}
	(\frac{\partial}{\partial x^p}\otimes 1_{\Hilbert[F]})\psi = \sum_{j=1}^N \frac{\partial \psi_j}{\partial x^p} \otimes \xi_j
\end{equation*}
so
\begin{align*}
	\norm{\frac{\partial}{\partial x^p}\otimes 1_{\Hilbert[F]}}{} 
	&\leq \sum_{j=1}^N \norm{\psi_j}{H_1} \\
	&\leq \sum_{j=1}^N C(g) (\norm{\psi_j}{} + \norm{\Dirac_g\psi_j}{}) \\
	&\leq N C(g) (\norm{\psi}{} + \norm{(\Dirac_g\otimes 1)\psi}{}) \leq \CDN_{g,F}(\psi) \text.
\end{align*}
On the other hand,
\begin{align*}
	\norm{(\Dirac_g \otimes 1)\psi}{}
	&\leq \sum_{j=1}^N \norm{\Dirac_g \psi_j}{} \\
	&\leq \sum_{j=1}^N \frac{1}{Q(g)} \norm{\psi_j}{H_1} \\
	&\leq \frac{1}{Q(g)} \norm{\psi}{H_1} \text{ by definition. }
\end{align*}
So we have shown that:
\begin{equation*}
	\frac{1}{N} Q(g) \CDN_{g,F}(\psi) \leq \norm{\psi}{H_1} \leq N C(g) \CDN_g(\psi) \text.
\end{equation*} 

With this in mind, we then obtain once more that:
\begin{align*}
\norm{((\Dirac_{g,F} -\Dirac_{h,G})\otimes 1_{\Hilbert_{F}})\psi}{} 
	&\leq r'_g(h) \norm{\psi}{H_1} + \opnorm{F-G}{}{\Hilbert[F]}\norm{\psi}{L^2} \\
	&\leq (r'_g(h) + \opnorm{F-G}{}{\Hilbert[F]}) \CDN_{g,F}(\psi) \text,
\end{align*}
where $r_g(h)$ is given by Expression \eqref{rg-eq} with $C(g)$ replaced with $C'(g)$. 

Hence, setting $R'_{g,F}(h,G) \coloneqq r'_g(h) + \opnorm{F-G}{}{\Hilbert[F]}$, we conclude that:
\begin{equation*}
	|\CDN_{g,F}(\psi) - \CDN_{h,G}(\psi)| \leq R'_{g,F}(h,G) \CDN_g(\psi) \text,
\end{equation*}
with $\lim_{n\rightarrow\infty}R'_{g,F}(h_n,G_n) = 0$.

Similarly, if $f\in C(M,\B)$ and $f \in \dom{\Lip_{g,F}}$, then:
\begin{align*}
	[\Dirac_{g,F} - \Dirac_{h,G}, f]
	&= [(\Dirac_g - \Dirac_h \otimes 1), f] + [[\theta\otimes (F-G), f] \text. 
\end{align*}

Once more,
\begin{align*}
	(\theta\otimes 1)&[\Dirac_{g,F}-\Dirac_{h,G}, f] + [\Dirac_{g,F}-\Dirac_{h,G}, f](\theta\otimes 1)\\
	&=[ (\theta\Dirac_{g} + \Dirac_{g}\theta)\otimes 1, f] + [\theta^2\otimes\Dirac[F],f] \\
	&=[ 0, f ] + [1\otimes\Dirac[F],f] = [1\otimes (F-G),f]\text, 
\end{align*}
and
\begin{align*}
	(\theta\otimes 1)[\Dirac_{g,F} - \Dirac_{h,G}, f] - [\Dirac_{g,F} - \Dirac_{g,H}, f](\theta\otimes 1)
	&= [ (\theta\Dirac_{g} - \Dirac_{g}\theta)\otimes 1, f] +  0 \\
	&= [(\Dirac_g - \Dirac_h)\otimes 1,f]\text. 
\end{align*}

Fix any state $\mu$ of $C(X,\B)$. We of course have:
\begin{align*}
	\opnorm{[1\otimes (F-G),f]}{}{\Hilbert_{g,F}} 
	&= \opnorm{[1\otimes (F-G),f-\mu(f)]}{}{\Hilbert_{g,F}} \\
	&\leq \opnorm{F-G}{}{\Hilbert[F]} \norm{f-\mu(f)}{C(M,\B)} \\
	&\leq \qdiam{C(M,\B)}{\Lip_{g,F}} \Lip_{g,F}(f)
\end{align*}
by \cite[Proposition 1.6]{Rieffel98a}.

On the other hand, since $\B$ is finite dimensional, there exists some linear basis $\eta_1,\ldots,\eta_{\dim \B}$ of $\B$, which we assume to be normalize to unit vectors. If $f \in C(M,\B)$ then $f = \sum_{j=1}^{\dim \B} f_j \eta_j$ for some unique $f_1,\ldots,f_{\dim \B} \in \C(M)$. 

Now, if for all $f \in C(M,\B)$ we set $\norm{f}{H_1} = \sum_{j=1}^{\dim \B} \CDN_g(f_j)$ when $f = \sum_{j=1}^{\dim \B} f_j \eta_j$, then we see that $\norm{\cdot}{H_1}$ and $\CDN_{g,F}$ have the same domain, are both lower continuous with respect to the norm of $C(M,\B)$ and thus both define a Banach space structure on this domain, and moreover, $\CDN_{g,F} \leq \norm{\cdot}{H_1}$ by construction. So by the open mapping theorem, these two norms are equivalent. Let $W > 0$ such that $\norm{\cdot}{H_1} \leq W \CDN_{g,F}$.

We then note that:
\begin{align*}
	\opnorm{[(\Dirac_g - \Dirac_h)\otimes 1, f]}{}{}
	&= \opnorm{\sum_{j=1}^{\dim \B} [\Dirac_g-\Dirac_h, f_j] \eta_j}{}{} \\
	&= \opnorm{\sum_{j=1}^{\dim \B} (\grad{g}f_j - \grad{h}f_j) \eta_j}{}{} \\ 
	&\leq \sum_{j=1}^{\dim\B} K Y_g(h) \CDN_g(f_j) \\
	&\leq \dim\B \cdot K Y_g(h) \cdot W \cdot \CDN_{g,F}(f) \text, 
\end{align*}
where $K$ and $Y_g$ are defined in Expression \eqref{YK-eq}.

As in the proof of the previous theorem, we thus obtain
\begin{equation*}
	\left|\Lip_{g,F}(f) - \Lip_{h,F}(f)\right| \leq J(h,G) \Lip_{g,F}(f) \text.
\end{equation*}
with $\lim_{(h,G)\rightarrow (g,F)} J(h,G) = 0$.

Hence, we can conclude using Lemma (\ref{main-lemma}) and Theorem (\ref{cont-thm}).

A similar series of computations may be used to prove the same result for the other two possible products.
\end{proof}

As the spectral action \cite{Connes} of a spectral triple of an almost commutative model encodes its physics, we see that Theorem (\ref{main-thm-2}) implies stability of the physics of such models in terms of the underlying metric data.

\bibliographystyle{amsplain} \bibliography{../thesis}

\vfill
\end{document}